\definecolor{linkred}{RGB}{0,191,255} %DeepSkyBlue
\definecolor{linkblue}{RGB}{16, 78, 139}
	\titlespacing{\section}{0pt}{12pt}{0pt}
	\titlespacing{\subsection}{0pt}{6pt}{0pt}
\long\def\@footnotetext#1{% 
\H@@footnotetext{% 
\ifHy@nesting 
\hyper@@anchor{\@currentHref}{#1}% 
\else 
\Hy@raisedlink{\hyper@@anchor{\@currentHref}{\relax}}#1% 
\fi 
}}
\def\@footnotemark{% 
\leavevmode 
\ifhmode\edef\@x@sf{\the\spacefactor}\nobreak\fi 
\H@refstepcounter{Hfootnote}% 
\hyper@makecurrent{Hfootnote}% 
\hyper@linkstart{link}{\@currentHref}% 
\@makefnmark 
\hyper@linkend 
\ifhmode\spacefactor\@x@sf\fi 
\relax 
}% 
\renewcommand*\@footnotemark{% 
\leavevmode 
\ifhmode 
\edef\@x@sf{\the\spacefactor}% 
\FN@mf@check 
\nobreak 
\fi 
\H@refstepcounter{Hfootnote}% 
\hyper@makecurrent{Hfootnote}% 
\hyper@linkstart{link}{\@currentHref}% 
\@makefnmark 
\hyper@linkend 
\ifFN@pp@towrite 
\FN@pp@writetemp 
\FN@pp@towritefalse 
\fi 
\FN@mf@prepare 
\ifhmode\spacefactor\@x@sf\fi 
\relax% 
}% 
\theoremstyle{plain}
\newtheorem{theorem}{Theorem}[section]
\newtheorem{proposition}[theorem]{Proposition}
\newtheorem{lemma}[theorem]{Lemma}
\newtheorem{corollary}[theorem]{Corollary}
\theoremstyle{definition}
\newtheorem{definition}[theorem]{Definition}
\newtheorem{remark}[theorem]{Remark}
\newcommand{\R}{{\mathbb R}}
\newcommand{\N}{{\mathbb N}}
\newcommand{\Q}{{\mathbb Q}}
\newcommand{\Z}{{\mathbb Z}}
\newcommand{\arc}{{\mathcal A}}
\newcommand{\FF}{\mathcal F}
\long\def\symbolfootnote[#1]#2{\begingroup%
\def\thefootnote{\fnsymbol{footnote}}\footnote[#1]{#2}\endgroup}
\def\blfootnote{\xdef\@thefnmark{}\@footnotetext}
\begin{document}

{\Large \bfseries Flip graphs for infinite type surfaces}

{\large Ariadna Fossas and Hugo Parlier\symbolfootnote[1]{\normalsize Supported by the Luxembourg National Research Fund OPEN grant O19/13865598.\\
{\em 2020 Mathematics Subject Classification:} Primary: 57K20, 05C10. Secondary: 05C12, 32G15, 30F60. \\
{\em Key words and phrases:} triangulations, flip graphs, infinite type surfaces.}
}

{\bf Abstract.} 
We associate to triangulations of infinite type surface a type of flip graph where simultaneous flips are allowed. Our main focus is on understanding exactly when two triangulations can be related by a sequence of flips. A consequence of our results is that flip graphs for infinite type surfaces have uncountably many connected components.
\vspace{0.5cm}

\section{Introduction} \label{s:introduction}

A variety of simplicial complexes have been used to study surfaces, their homeomorphisms and their geometric structures. For finite type surfaces, arc and curve type graphs have been very useful tools for studying the geometry of different moduli spaces. In particular, flip graphs give a way of measuring distance between triangulations but also provide a coarse model for mapping class groups. In this article, we adapt flip graphs to the setting of infinite type surfaces. As one might expect, the passage to infinite type surfaces requires a little bit of care. 

Our starting point will always be a surface $\Sigma$ obtained by pasting together an infinite collection of triangles, and then removing the vertices who then belong to the space of ends of $\Sigma$. We define a graph $\FF(\Sigma)$ whose vertices are these triangulations up to isotopy and whose edges come from flipping arcs that lie in quadrilaterals. More precisely, two triangulations are joined by an edge if they are related by any number (possibly infinite) of flips that can be performed simultaneously. This adaptation of the usual flip graph has already been studied in the finite type case \cite{BoseEtAl,Disarlo-Parlier18}, and importantly, for infinite type surfaces it allows one to measure distances between a larger set of triangulations. If one only allow single flips, two triangulations can only be related if they differ by finitely many arcs (see \cite{Fossas-Nguyen} for an example of such a flip graph). 

Figuring out which triangulations are related by a sequence of flips in our setting is exactly the main result of this paper:

\begin{theorem}\label{thm:A}
Let $\Sigma$ be an infinite type surface. Let $S$ and $T$ be triangulations of $\Sigma$. Then, $S$ and $T$ are in the same connected component of $\mathcal{F}(\Sigma)$ if and only if there exists $K \geq 0$ such that for every $\alpha$ arc of $S$ and every $\beta$ arc of $T$ the intersection numbers $i(\alpha,T)$ and $i(\beta,S)$ are bounded by $K$.    
\end{theorem}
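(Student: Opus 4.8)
The plan is to prove the two implications separately. The forward (necessity) direction rests on a local estimate for how a single simultaneous flip distorts intersection numbers; the backward (sufficiency) direction is the substantial part.

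\textbf{Necessity.} First I would prove a local lemma: if $S'$ is obtained from $S$ by a single simultaneous flip, then for every arc $\gamma$ one has $i(\gamma, S') \le 2\, i(\gamma, S) + c$ for a universal constant $c$. This is a purely local computation inside each flipped quadrilateral: the two diagonals $e, e'$ of a quadrilateral $Q$ whose sides lie in $S \cap S'$ satisfy $i(\gamma, e') \le i(\gamma, e) + (\text{number of segments of } \gamma \text{ in } Q)$, and summing over the flipped quadrilaterals—which are pairwise compatible and only finitely many of which meet $\gamma$—gives the stated bound. In particular a flip preserves finiteness of intersection numbers. Now, given a flip path $S = S_0, \dots, S_n = T$ and a fixed arc $\beta$ of $T$, we have $i(\beta, S_n) = 0$; iterating the local lemma backwards along the path yields $i(\beta, S_0) \le F(n)$ for a bound $F(n)$ depending only on $n$. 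Since this is uniform in $\beta$, and symmetrically $i(\alpha, T) \le F(n)$ for every arc $\alpha$ of $S$, the choice $K = F(n)$ works.

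\textbf{Sufficiency.} Here the real work lies, and I would organize it as an induction on $M = \sup\{ i(\alpha, T), i(\beta, S) : \alpha \in S,\ \beta \in T\}$, which is finite by hypothesis. When $M = 0$ the arcs of $S$ and $T$ are pairwise disjoint and both families triangulate $\Sigma$, forcing $S = T$. For the inductive step the goal is to produce a single simultaneous flip $S \to S'$ (or a uniformly bounded number of them) after which the corresponding bound strictly decreases, so that $S'$ and $T$ are connected by induction. To find it, I would study the complementary regions of $S \cup T$: the bound $M$ forces every arc of $S$ to meet $T$ at most $M$ times and vice versa, so $S \cup T$ cuts $\Sigma$ into polygons of uniformly bounded complexity. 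Inside the triangles of $T$ the strands of $S$ admit innermost crossings, and flipping the arcs of $S$ realizing such crossings removes intersections, exactly as in the finite-type reduction underlying the distance bounds of \cite{Disarlo-Parlier18}.

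The main obstacle is genuinely the passage from \emph{each local reduction exists} to \emph{a single globally simultaneous flip reduces the complexity everywhere at once}. Two arcs of $S$ lying in a common triangle cannot be flipped simultaneously, so the arcs selected for reduction must be partitioned into a bounded number of compatible batches; I would control this with a colouring argument exploiting local finiteness of the triangulation and the uniform bound on the degrees of the complementary polygons, so that each inductive step costs only a bounded number of flip-rounds. The delicate point to verify is that flipping a batch does not raise the intersection bound elsewhere—flips in disjoint quadrilaterals do not interfere, but adjacent ones must be sequenced correctly—and that the induction terminates after a number of rounds bounded in terms of $M$ alone, which is exactly what guarantees a finite flip path and hence connectivity in $\mathcal{F}(\Sigma)$.
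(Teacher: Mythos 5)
Your necessity argument is fine and essentially matches the paper's: the paper iterates the per-flip estimate $i(\alpha,T_0)\leq 4\max\{i(\beta,T_0):\beta\in T_{i-1}\}+1$ along the path, getting the uniform bound $(4^k-1)/3$; your version with $i(\gamma,S')\leq 2\,i(\gamma,S)+c$ is correct (each strand of $\gamma$ crossing a new diagonal must cross the boundary of its quadrilateral, each unflipped arc borders at most two flipped quadrilaterals, and endpoint/diagonal strands contribute a bounded additive term) and yields the same conclusion.

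The sufficiency direction, however, has a genuine gap at exactly the step you flag as ``delicate'': you give no mechanism forcing the quantity $M=\sup\{i(\alpha,T),i(\beta,S)\}$ to strictly decrease, and indeed it need not. When you flip an arc $\alpha$ of $S$ at an ``innermost'' crossing, the new diagonal $\alpha'$ is crossed by every strand of $T$ running between the two opposite corners of the quadrilateral, so $i(\alpha',T)$ can be as large as roughly $4M$ --- this is the same multiplicative blow-up your own necessity lemma quantifies, and it works against you here. The classical finite-type induction runs on the \emph{total} intersection number $i(S,T)$, which is monotone under the right flip choices but is infinite on an infinite type surface; your surrogate $M$ is not monotone under those moves, and no finite monotone substitute is proposed, so neither the strict decrease per round nor termination in boundedly many rounds is established. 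The paper avoids needing any monotone quantity along the path: instead of reducing $M$, it monotonically \emph{grows the set of arcs of $S$ already present}, by decomposing $S$ via Brooks' theorem applied to $G^K$ into $f(K)=2\cdot 3^K-1$ multiarcs $\mu_i$ in which any arc of $T$ meets at most one arc (Lemma \ref{lem:coloring}), and then combing the current triangulation onto each $\mu_i$ in turn; the combings for distinct arcs of one $\mu_i$ happen in disjoint subsurfaces triangulated by at most $K$ triangles, hence cost at most $K^2$ simultaneous flips (via the finite-type bound, Corollary 2.13 of \cite{Disarlo-Parlier19}), and Lemma \ref{lem:intersect} guarantees combing never increases intersection with $T$, so the hypothesis of the coloring lemma persists through all $f(K)$ stages. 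Your proposal could be repaired by making this substitution --- induct on arcs of $S$ acquired and kept (via a projection with the monotonicity of Lemma \ref{lem:intersect}), not on $M$ --- but as written the inductive step does not go through.
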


The proof of the theorem involves putting together a number of preliminary results, one of them being Proposition \ref{prop:convexity}, which uses a technique from \cite{Disarlo-Parlier19,Parlier-Weber} which shows that triangulations that share a multiarc form a convex subset of $\mathcal{F}(\Sigma)$. The rest of proof is mainly combinatorial, and relies on a graph coloring argument, and in particular Brooks' theorem. 

Using Theorem \ref{thm:A}, it is straightforward to construct examples of triangulations that are not related by sequences of flip transformations, showing that $\FF(\Sigma)$ has multiple connected components. In fact we show:

\begin{corollary}\label{cor:B}
For any $\Sigma$ of infinite type, $\mathcal{F}(\Sigma)$ has uncountably many connected components.
\end{corollary}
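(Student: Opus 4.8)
The plan is to leverage Theorem~\ref{thm:A} to reduce the statement about connected components to a counting argument about families of triangulations with pairwise unbounded intersection. By the theorem, two triangulations $S$ and $T$ lie in the same component of $\FF(\Sigma)$ precisely when there is a uniform bound $K$ on the intersection numbers $i(\alpha,T)$ and $i(\beta,S)$ over all arcs $\alpha$ of $S$ and $\beta$ of $T$. So to produce uncountably many distinct components, it suffices to exhibit an uncountable family $\{T_x\}_{x \in X}$ of triangulations such that for any two distinct indices $x \neq y$, the intersection numbers between the arcs of $T_x$ and the arcs of $T_y$ are \emph{unbounded}. Any such family meets at most countably many triangulations per component (in fact, one would expect each component to contain only countably many of the $T_x$, but even the weaker statement that distinct $T_x$ never share a component suffices), forcing uncountably many components.

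First I would fix a concrete infinite type surface and an explicit ``base'' triangulation, then identify an infinite sequence of disjoint quadrilaterals (or an escaping sequence of essential pieces) in $\Sigma$ going out to an end, inside each of which a local modification can be made independently. The key mechanism is that within a fixed subsurface one can replace a local arc by a spiralling or twisting arc whose intersection number with a transverse arc grows; by performing such a modification in the $n$-th piece to ``depth'' $d_n$, one encodes a sequence $(d_n)$ into the isotopy class of a triangulation. Concretely, I would associate to each infinite binary (or integer) sequence a triangulation built by Dehn-twisting or spiralling in the $n$-th piece an amount dictated by the $n$-th entry, so that the resulting intersection data between two triangulations records where and by how much their defining sequences differ.

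The heart of the argument is then showing that whenever two such sequences differ in infinitely many coordinates (or differ in coordinates by unbounded amounts), the corresponding triangulations have unbounded pairwise intersection: if in infinitely many pieces the local arcs wind by differing amounts, then for arcs supported in those pieces the intersection numbers are at least the difference of the winding parameters, which is unbounded along the sequence. To get an \emph{uncountable} indexing set with this property, I would take a family of sequences that pairwise differ in unboundedly many coordinates --- for instance, choosing $X$ to be an uncountable set of sequences $(d_n) \in \N^{\N}$ that are pairwise ``eventually far apart'', such as taking $d_n$ to be large and distinct values along sparse supports coded by an uncountable almost-disjoint family of subsets of $\N$. For any two distinct members of such a family, the local intersection contributions grow without bound, so by Theorem~\ref{thm:A} they lie in distinct components.

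The step I expect to be the main obstacle is establishing the quantitative lower bound on intersection numbers cleanly: I need to guarantee not merely that two triangulations differ, but that \emph{no} uniform bound $K$ can dominate all the relevant intersection numbers, which requires controlling how the local modifications interact across the infinitely many pieces and ensuring the winding in one piece cannot be ``absorbed'' or cancelled by isotopy using the global topology of $\Sigma$. In particular I must check that the local twisting arcs remain essential and that their intersection with transverse arcs of the other triangulation genuinely grows with the twisting parameter, uniformly in the piece. Once this quantitative control and the existence of a suitable uncountable almost-disjoint indexing family are in place, the conclusion that $\FF(\Sigma)$ has uncountably many components follows immediately from Theorem~\ref{thm:A}.
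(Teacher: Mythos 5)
Your proposal follows essentially the same route as the paper: the authors likewise apply Theorem~\ref{thm:A} to triangulations $\varphi_N(T)$ obtained by Dehn twisting a base triangulation along an escaping sequence of disjoint simple closed curves $\delta_i$, with twisting powers $m_i$ chosen so that $i(\varphi_N(\alpha_i),\alpha_i)\geq i$, and with the index set an uncountable family of infinite subsets of $\N$ whose pairwise differences are infinite (equivalent in effect to your almost-disjoint family). The obstacle you flag --- that local twisting might be absorbed or cancelled globally --- is resolved in the paper by first extracting pairs $(\delta_{i_j},\alpha_j)$ with $i(\delta_{i_j},\alpha_j)\neq 0$ and each $\alpha_j$ disjoint from all the other chosen curves, so that for $i_k\in M\setminus N$ the map $\varphi_N$ fixes $\alpha_{i_k}$ and one gets $i(\varphi_N(\alpha_{i_k}),\varphi_M(\alpha_{i_k}))=i(\alpha_{i_k},\varphi_M(\alpha_{i_k}))\geq i_k$, making the unboundedness of intersection numbers immediate.
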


This is analogous to what happens for hyperbolic structures for infinite type surfaces. The graph $\mathcal{F}(\Sigma)$ can be thought of as a combinatorial analogue of Teichm\"uller space which, classically, is the space of conformal structures up to quasi-conformal map and also has uncountably many connected components \cite{Basmajian2}.There is a metric point of view to what we do: if one replaces triangles by ideal hyperbolic triangles, and pastes them with $0$ shear, the resulting hyperbolic metric is well defined. A flip now changes the hyperbolic metric, but the two metrics are bi-Lipschitz equivalent. Although this analogy is not concretely used here in any way, it would be interesting to explore to what extent $\mathcal{F}(\Sigma)$ provides a combinatorial model to spaces that arise in the smooth setting.

In a more quantitative direction, our methods give the following upper and lower bounds on flip distance between two triangulations in terms of the maximal intersection between the individual arcs of one triangulation and the other triangulation. The following is the combination of corollaries \ref{cor:lower} and \ref{cor:upper}.

\begin{corollary}\label{cor:upperandlower}
Let $K\geq 1$ be a constant and $T$ and $T'$ be triangulations of a surface $\Sigma$.

If every arc $\alpha$ of $T$ and every arc $\beta$ of $T'$, the intersection numbers $i(\alpha,T')$ and $i(\beta,T)$ are bounded above by $K$, then $T$ and $T'$ are related by at most
$$
2 \,K^2 \cdot 3^K - K^2
$$
flips.

Conversely, if $T$ contains an arc $\alpha$ that satisfies $i(\alpha,T')\geq K$, then the flip distance between $T$ and $T'$ is at least
$$
\log_4\left(3(K+1) \right).
$$
\end{corollary}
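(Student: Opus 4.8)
The plan is to prove the two bounds in Corollary~\ref{cor:upperandlower} separately, since they are genuinely different in character.

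\textbf{The upper bound.} The strategy is to reduce the global statement to a local, per-arc analysis and then assemble the pieces using the simultaneous-flip structure of $\FF(\Sigma)$. First I would fix an arc $\alpha$ of $T$ and consider the quadrilateral-by-quadrilateral picture of how $\alpha$ sits relative to $T'$. Since $i(\alpha,T')\le K$, the arc $\alpha$ crosses at most $K$ arcs of $T'$, so it passes through a bounded chain of triangles of $T'$; dually, the arcs of $T'$ meeting a fixed region decompose it into a bounded combinatorial pattern. The key technical input should be Proposition~\ref{prop:convexity}: because triangulations sharing a multiarc form a convex subset of $\FF(\Sigma)$, I can try to flip $T$ toward $T'$ one ``shared arc'' at a time without ever increasing the intersection data already reduced, staying inside the convex hull and hence never wandering off. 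The counting then comes from the standard fact that a single arc crossing a triangulated disk at most $K$ times can be flipped into agreement in a number of flips that is exponential in $K$ --- this is exactly where the $3^K$ factor should appear, matching the branching of the flip process inside a $K$-crossing strip. The two factors of $K^2$ come from there being on the order of $K$ relevant arcs, each carrying an intersection budget of order $K$; multiplying the per-arc exponential cost $3^K$ by the quadratic count $K^2$ and subtracting a lower-order correction $K^2$ yields the stated $2K^2\cdot 3^K - K^2$. The main obstacle here is bookkeeping: I must ensure the simultaneous flips across different quadrilaterals really can be performed in parallel (so the bound is on the \emph{number of flip steps}, using the simultaneous-flip convention) rather than serially, and that reducing intersection along one arc does not undo progress on another; convexity is precisely the tool that controls this interference.

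\textbf{The lower bound.} This is an entropy/growth argument and is conceptually cleaner. The idea is that a single simultaneous flip cannot change intersection numbers too quickly. If $T=T_0, T_1, \dots, T_n=T'$ is a sequence of simultaneous flips, I would track the quantity $i(\alpha,T_j)$ for the distinguished arc $\alpha$ with $i(\alpha,T')\ge K$, noting that $i(\alpha,T_0)=i(\alpha,T)=0$ since $\alpha\in T$. The crucial estimate is that one flip step can at most multiply the relevant intersection count by a bounded factor --- the number $4$ in $\log_4$ strongly suggests that each flip at most quadruples the intersection number of $\alpha$ with the evolving triangulation (a flip replaces one diagonal of a quadrilateral by the other, and the new arc can cross $\alpha$ a controlled multiple of the old crossings). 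Chaining this over $n$ flips gives $i(\alpha,T')\le c\cdot 4^{\,n}$ for an appropriate constant, and solving for $n$ produces the bound $n\ge \log_4(3(K+1))$. The main obstacle is pinning down the exact per-flip multiplicative constant and the additive/multiplicative constants $3$ and $K+1$; these come from a careful worst-case analysis of how intersection numbers transform under a flip, so I would isolate that single-step inequality as a self-contained lemma (of the form $i(\alpha,T_{j+1})\le 4\, i(\alpha,T_j)+(\text{small})$) and then iterate it. Once that lemma is in hand, both the statement and the logarithm base are essentially forced.
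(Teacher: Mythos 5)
Your lower-bound argument is essentially correct and is the paper's own mechanism in dual form: the paper (Proposition~\ref{prop:nocanflip} and Corollary~\ref{cor:lower}) fixes the target triangulation $T'$ and tracks a descending chain of arcs $\alpha_i \in T_i$ whose intersection with $T'$ can drop by at most a factor of $4$ (plus $1$) per flip, since any arc crossing $\alpha_i$ must cross the boundary of the flipped quadrilateral $Q_i$, which has only four sides; you instead fix the arc $\alpha$ and bound the growth of $i(\alpha,T_j)$, which works for the same quadrilateral-boundary reason (each strand of $\alpha$ through a flipped quadrilateral meets the new diagonal at most once and contributes two crossings with the boundary arcs, each of which survives into $T_{j+1}$ and lies on at most two flipped quadrilaterals). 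Your proposed single-step lemma $i(\alpha,T_{j+1}) \leq 4\, i(\alpha,T_j) + \text{small}$ is true and iterates to a $\log_4$ bound matching the paper's up to additive constants, so this half is fine.

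The upper bound, however, has a genuine gap: you are missing the key idea, which is Lemma~\ref{lem:coloring}. The paper does \emph{not} proceed arc by arc. It forms the graph $G$ whose vertices are the arcs of one triangulation, with edges between arcs sharing a triangle (degree $\leq 4$), observes that two arcs crossed by a common arc of the other triangulation are within distance $K-1$ in $G$, and then applies Brooks' theorem to the power graph $G^K$ (degree $\leq 2\cdot 3^K - 2$) to partition \emph{all} the arcs --- possibly infinitely many --- into $f(K) = 2\cdot 3^K - 1$ color classes. Within one class, the combing projections of Proposition~\ref{prop:convexity} take place in subsurfaces with disjoint interiors (each spanned by at most $K$ triangles, with intersection number at most $K^2$, hence flip distance at most $K^2$ by a finite-type result of Disarlo--Parlier), so each class is handled in at most $K^2$ \emph{simultaneous} flips, giving $(2\cdot 3^K - 1)\cdot K^2$ in total. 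Your factor attribution is inverted --- in the paper $3^K$ is the number of parallel rounds (a chromatic number), not a per-arc flip cost, and $K^2$ is the per-round cost (polynomial, via intersection-number bounds), not an arc count times a budget --- and, more seriously, your serial ``one shared arc at a time'' scheme cannot terminate when the triangulations are infinite. Convexity alone, which is all you invoke to control interference, does not produce the partition into boundedly many simultaneous rounds; the distance-$K$ separation in $G$ plus the coloring of $G^K$ is precisely the ingredient that certifies infinitely many local moves can be packaged into $f(K)$ flips, and without it your sketch yields neither a finite bound in the infinite-type case nor the stated constant $2K^2\cdot 3^K - K^2$.
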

Note that as far as we can tell, the above statement is also new for finite type surfaces. 

{\bf Organization.}

The article is organized as follows. In Section \ref{sec:setup}, after some definitions and notation, we prove some results that provide the groundwork for the proving the main theorem. In particular, we show the easier direction of Theorem \ref{thm:A}, the proof of which is given in Section \ref{sec:mainproof}. In the final section (Section \ref{sec:connect}), we discuss some corollaries of our results, namely about the connected components of flip graphs (Corollary \ref{cor:B}). 

{\bf Acknowledgments.}

We thank Stefan Wenger for an inspiring conversation about bi-Lipschitz equivalent planar metrics when we first started working on this project.

\section{Setup and preliminary results}\label{sec:setup}

Let $\Sigma$ be a connected orientable surface obtained by the following procedure. We begin with a countable collection of triangles and paste the sides of triangles in pairs to obtain a connected orientable surface $\bar{\Sigma}$ (see Figure \ref{fig:Pasting} for an illustration). The image of the vertices of the triangles under the pasting is a collection of points which we denote by $P$ and call the ideal vertex set. We now set $\Sigma = \bar{\Sigma} \setminus P$. Note that each element of $P$ belongs to an end of $\Sigma$, and although they don't belong to $\Sigma$, this ideal vertex set is implicit when we use the notation $\Sigma$. Although it will also be used just for the topological surface obtained by this procedure, $\Sigma$ is really a pair consisting in the surface and its ideal vertex set. 

\begin{figure}[h]
\begin{center}
\includegraphics[width=10cm]{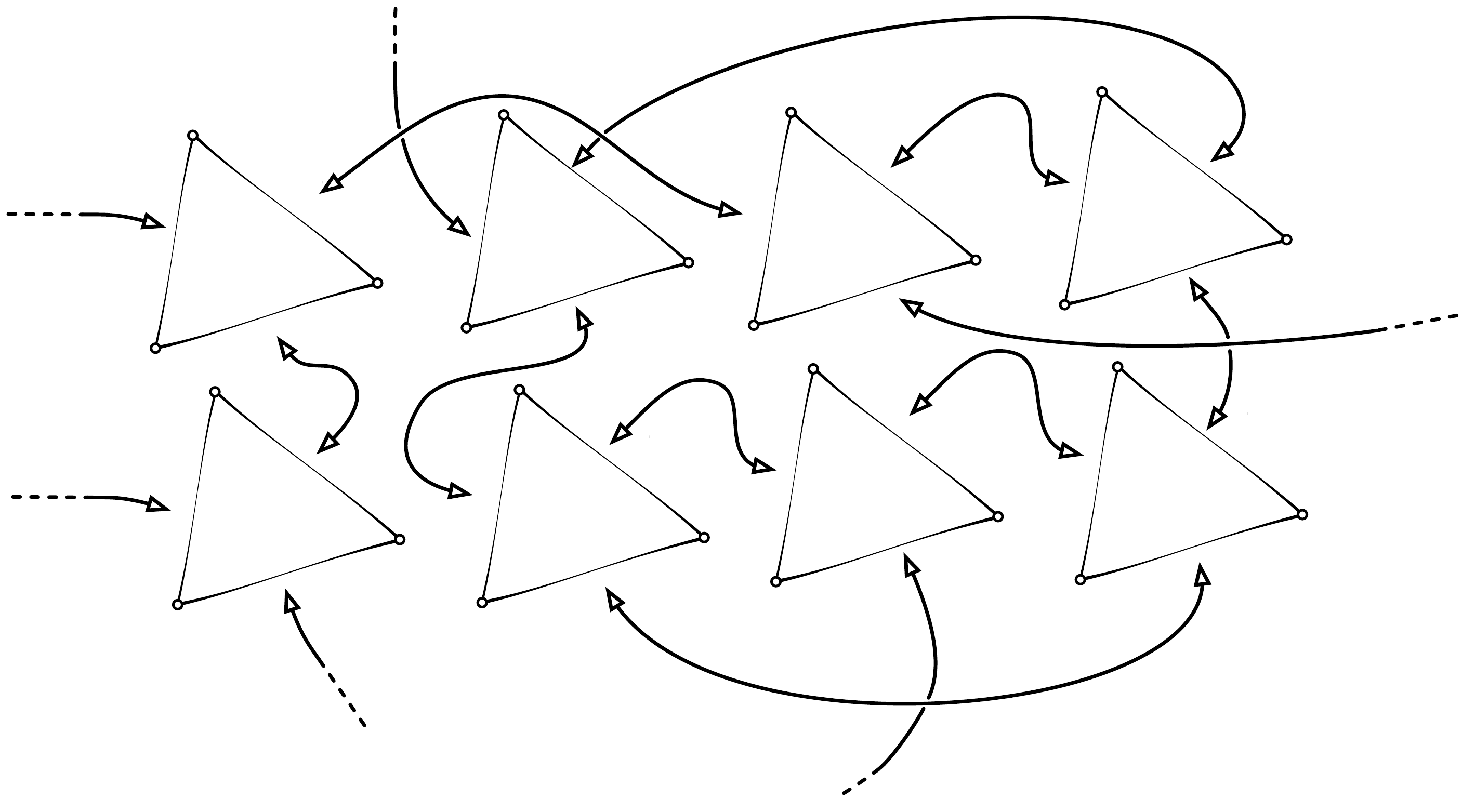}
\vspace{-24pt}
\end{center}
\caption{Pasting triangles to obtain $\Sigma$}
\label{fig:Pasting}
\end{figure}

Arcs of $\Sigma$ are non-trivial simple paths between (non-necessarily distinct) elements of $P$. We denote by $\arc(\Sigma)$ the set of arcs of $\Sigma$ up to isotopy fixing $P$ pointwise. We are interested in triangulations of $\Sigma$ by which we mean disjoint collections of arcs between elements of $P$ that decompose $\Sigma$ into a collection of (open) triangles. 

Note that arc and curve type graphs are becoming more understood in the context of infinite type surfaces \cite{Bavard,Durham-Fanoni-Vlamis,Fanoni-Ghaswala-McLeay, Aramayona-Fossas-Parlier} have been studied in different contexts and for different uses but, to the best of our knowledge, graphs with vertices being triangulations of infinite type surfaces have not yet been studied. 

\begin{remark}
An alternative approach would be to consider a surface with a fixed set of marked points and then consider maximal multiarcs (sets of isotopy classes of arcs, disjoint in their interior and maximal for inclusion). For finite type surfaces, these approaches are equivalent because given any surface (of negative Euler characteristic) and non-empty set of marked points, a maximal multiarc always decomposes the surface into triangles. For infinite type surfaces, if a triangulation exists then it is maximal but the converse is not always true. In fact, there are infinite type surfaces with a prescribed set of marked points $P$ which do not admit {\it any} triangulations with vertex set $P$. An easy example is the following: consider a one ended infinite genus surface and $P$ consisting of a single marked point. A maximal set of arcs contains an infinite number of arcs, and thus, if you trace a small circle around the marked point and look at the intersection with arcs (if necessary, realize the arcs with a metric) there will be accumulation points. These points correspond to  accumulation arcs, which cannot exist in a triangulation.
\end{remark}

Surfaces with sets of ideal vertices that can obviously be constructed via such a triangulation include $\R^2\setminus \Z^2$ with ideal vertex set $\Z^2$ as portrayed in figure \ref{fig:R2Z2}. This surface is commonly called the flute surface \cite{Basmajian}. 

\begin{figure}[h]
\begin{center}
\includegraphics[width=8cm]{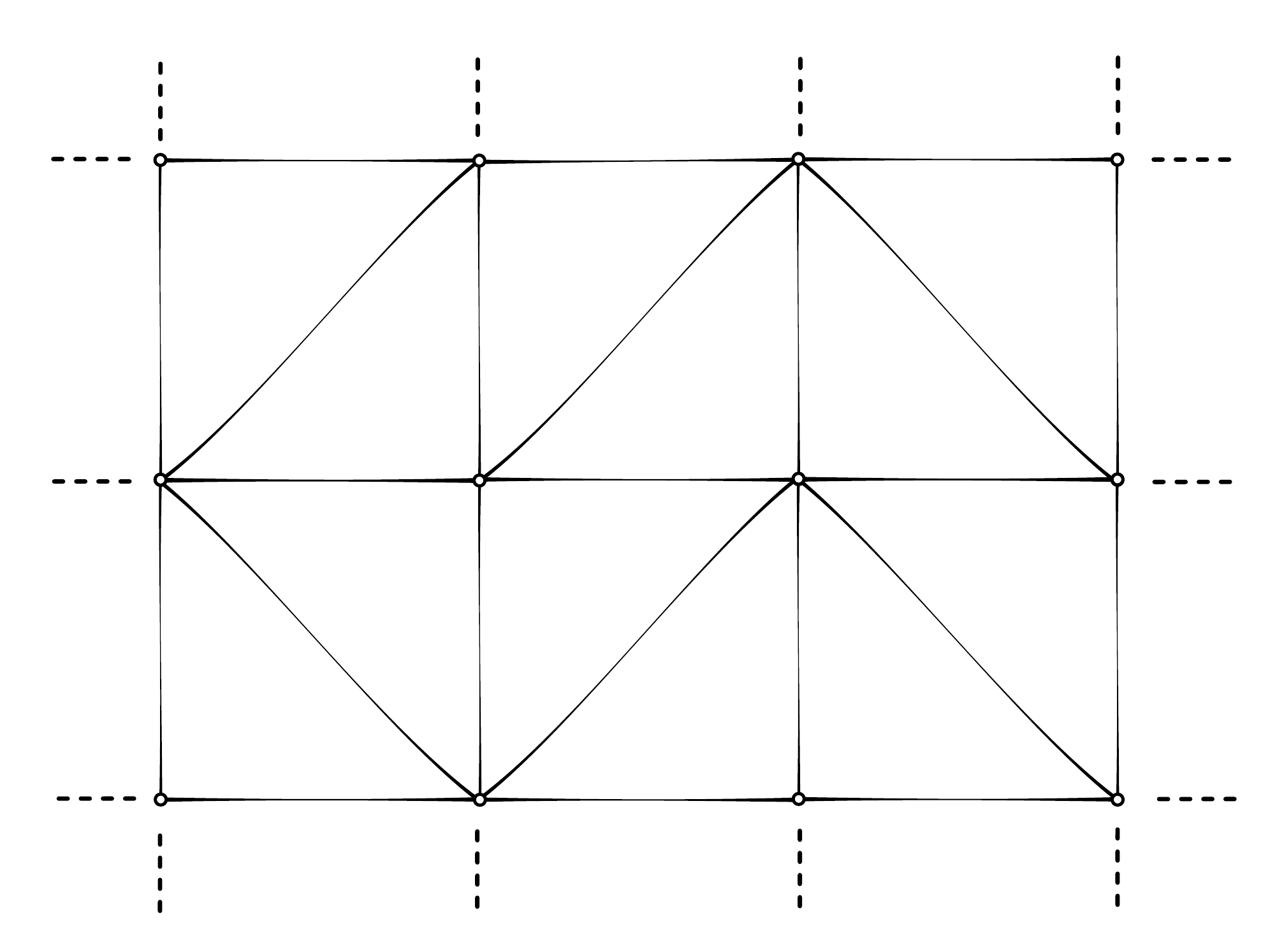}
\vspace{-24pt}
\end{center}
\caption{One way to (locally) paste triangles to obtain the flute surface}
\label{fig:R2Z2}
\end{figure}

A more subtle surface is obtained via the standard representation of the Farey graph in the hyperbolic plane with $\Q\cup \{\infty\}$ as ideal vertices. Note that in this case $\Sigma \cong \R^2$, so it only has one end, and all arcs leave and terminate in this end. And yet $P= \Q\cup \{\infty\}$, and there is an order on this set. Surfaces with infinite genus can also be obtained (for instance by adding genus to each triangle in the Farey graph tessellation to obtain the Loch Ness Monster surface as in Figure \ref{fig:Farey}), but, as seen previously, any arbitrary combination of $\Sigma$ and $P$ is not possible. 

\begin{figure}[h]
\begin{center}
\includegraphics[width=10cm]{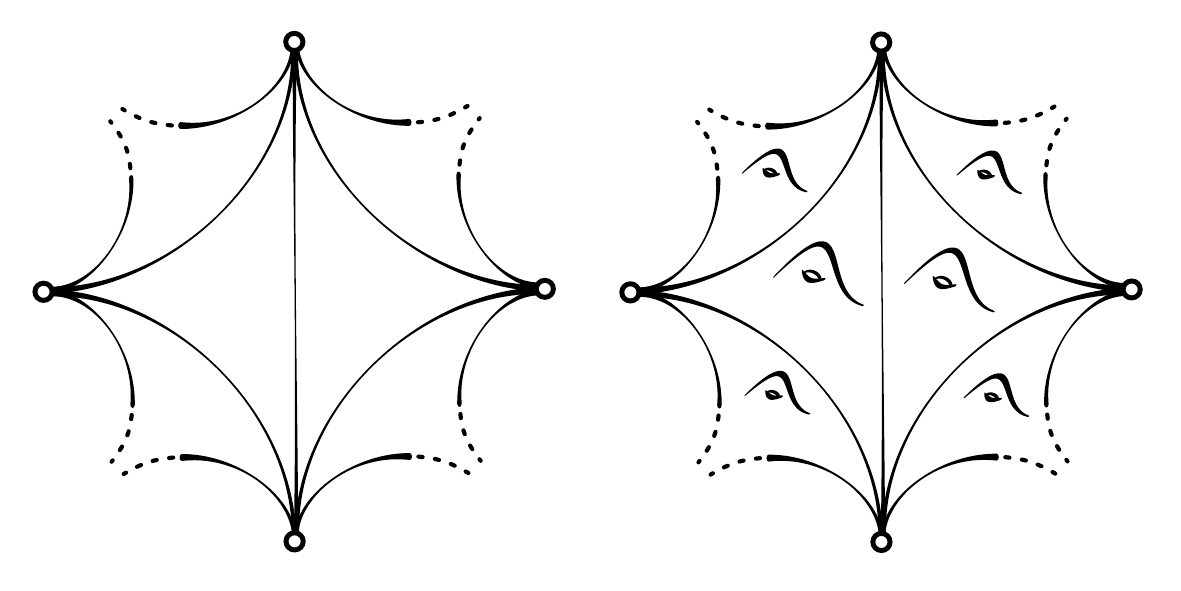}
\vspace{-24pt}
\end{center}
\caption{To the right of the Farey triangulation is the Loch Ness Monster: each "triangular" genus $1$ sub surface is easily triangulated}
\label{fig:Farey}
\end{figure}

Now given $\Sigma$, we define an associated flip graph $\mathcal{F}(\Sigma)$.

\begin{definition} Let $\mu$ be a (possibly infinite) multiarc of $T$ such that every arc $a\in \mu$ bounds two distinct triangles (that form a quadrilateral with $a$ as a diagonal). Suppose further that if $a,b \in \mu$ with $a\neq b$, the quadrilaterals containing $a$ and $b$ as diagonals are distinct. We define the triangulation $f_\mu(T)=T'$ to be the one obtained from $T$ by replacing every arc $a\in \mu$ by the other diagonal arc of the quadrilateral defined by the two triangles containing $a$. We say that $T$ and $T'$ are related by a \textit{simultaneous flip} or simply a {\it flip}.
\end{definition}

\begin{figure}[h]
\begin{center}
\includegraphics[width=8cm]{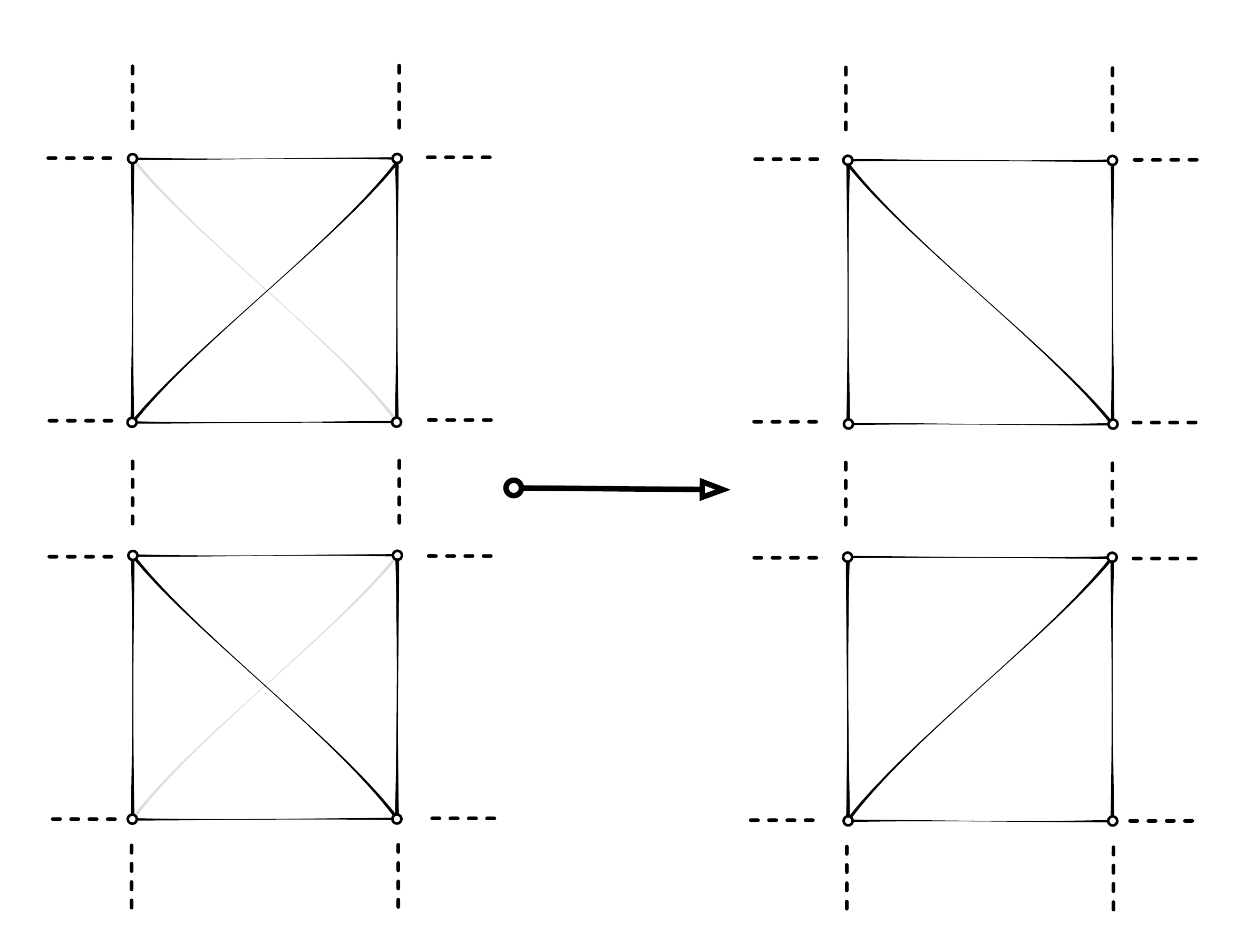}
\vspace{-24pt}
\end{center}
\caption{A simultaneous flip is done performing single flips simultaneously on disjoint quadrilaterals}
\label{fig:Flip}
\end{figure}

This allows us to define $\FF(\Sigma)$: vertices are the set of triangulations of $\Sigma$ and there is an edge between $T$ and $T'$ if they are related by a flip. We denote the connected components of $\FF(\Sigma)$ by $\mathcal{F}^i(\Sigma)$, $i\in I$, where $I$ is an index set. 

When $\Sigma$ is of finite type, $\mathcal{F}(\Sigma)$ is always connected \cite{Mosher} but we will see that when $\Sigma$ is of infinite type, it is always a graph with infinitely many connected components. 

The triangulations lying in a given connected component are formed of arcs of $\mathcal{A}(\Sigma)$. Our first observations are the following.

\begin{proposition}\label{prop:arcfinite}
Let $T_\alpha\in\mathcal{F}(\Sigma)$ and $\alpha\in \mathcal{A}(\Sigma)$. Then $i(\alpha,T) < +\infty$.
\end{proposition}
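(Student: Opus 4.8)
The plan is to produce, inside the isotopy class of the arc $\alpha$, a representative that meets the $1$--skeleton of the triangulation $T$ in only finitely many points; since $i(\alpha,T)$ is computed from a minimal-position representative, this bounds it. The starting observation is that $T$ is \emph{locally finite} in $\Sigma$. Indeed, because the ideal vertices $P$ have been removed, every point of $\Sigma$ lies either in the open interior of a triangle or in the interior of an edge: in the first case a small neighbourhood meets a single closed triangle, and in the second it meets exactly the two triangles sharing that edge. Hence every point of $\Sigma$ has a neighbourhood meeting at most two closed triangles, so any compact subset of $\Sigma$ meets finitely many triangles and therefore finitely many edges of $T$. Consequently it suffices to confine all intersections of a good representative of $\alpha$ with $T$ to a compact sub-arc, i.e. to show that near each of its two ideal endpoints the arc crosses no edge of $T$.

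The remaining point is thus purely local, near an ideal vertex $p\in P$ that is an endpoint of $\alpha$. Here I would use that the triangles incident to $p$ are arranged in a fan around $p$, whose spokes are the edges of $T$ issuing from $p$ and whose remaining outer edges are not incident to $p$, and that this fan fills a neighbourhood of the end carrying $p$. A representative of $\alpha$ running into $p$ eventually enters this neighbourhood and, after an isotopy fixing $P$ that unwinds any spiralling, it enters a single fan triangle and runs monotonically into $p$, meeting no edge of $T$ on this terminal segment. The cleanest way to make this rigorous is to pass to the hyperbolic metric described in the introduction, obtained by realising each triangle as an ideal hyperbolic triangle and pasting with zero shear: then $P$ becomes a set of cusps, $T$ is a geodesic ideal triangulation, and $\alpha$ has a unique geodesic representative $\gamma$ that is automatically in minimal position with $T$. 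Inside a small horoball neighbourhood of the cusp $p$ the only edges of $T$ are the spokes at $p$, which run straight into the cusp; the ray of $\gamma$ into $p$ runs parallel to them and hence crosses none. Thus $\gamma$ meets $T$ in no points inside suitable horoballs at both of its endpoints.

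Putting these together, all intersection points of $\gamma$ with the $1$--skeleton of $T$ lie on $\gamma$ with two horoball neighbourhoods of its endpoints removed, which is a compact sub-arc; by local finiteness this sub-arc meets finitely many edges and, by transversality, crosses each of them finitely often. Hence $i(\alpha,T)=\#(\gamma\cap T)<+\infty$.

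I expect the main obstacle to be precisely the behaviour of $\alpha$ near the ideal vertices: a priori a representative can cross infinitely many edges of $T$ while spiralling into an end, so the content of the proposition is that this spiralling is inessential and disappears for a minimal (here, geodesic) representative. If one prefers to avoid the metric, the same conclusion is reached topologically by first establishing that the fan of triangles at $p$ really fills a neighbourhood of the corresponding end, so that triangles not incident to $p$ do not accumulate at $p$, and then removing bigons and spiralling by an isotopy fixing $P$; it is this accumulation statement, rather than the global counting, that requires care.
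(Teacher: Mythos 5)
Your proposal is correct, but it is considerably more elaborate than the paper's argument, and the extra work is spent exactly where the paper is terse. The paper's proof is two sentences: if (a representative of) $\alpha$ met infinitely many arcs of $T$, the intersection points would accumulate at some point of the closed arc; by local finiteness of the pasting, an accumulation point cannot lie in the interior of a triangle or in the interior of an edge, hence not in $\Sigma$ --- and the argument stops there. You share the same core step (local finiteness of $T$, so a compact sub-arc meets only finitely many edges), but you correctly observe that this alone does not finish the proof: the accumulation point could be an ideal vertex $p \in P$, which indeed does not belong to $\Sigma$, and edges of $T$ genuinely do accumulate at points of $P$ (e.g.\ the infinitely many Farey edges incident to $\infty$), so one must still rule out essential spiralling of $\alpha$ into its endpoints. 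Your resolution --- realize the zero-shear hyperbolic metric mentioned in the paper's introduction, take the geodesic representative, and note that inside small horoballs the only edges are the spokes at $p$, which the terminal ray of the geodesic cannot cross --- is sound and makes the endpoint behaviour fully rigorous, at the cost of importing metric machinery the paper only asserts informally (in particular, your route tacitly uses completeness of the zero-shear metric and existence and minimality of geodesic representatives, neither of which the paper proves; your sketched topological alternative via the fan of triangles at $p$ avoids this dependence and is closer in spirit to what the paper presumably intends). In short: same local-finiteness skeleton as the paper, plus a genuine and worthwhile strengthening at the ends, where the paper's one-line proof glosses over the only delicate case.
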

\begin{proof}
The argument is similar to above: if the arc intersected an infinite number of arcs of $T$, the intersection points would contain an accumulation point. This accumulation point cannot belong to the interior of a triangle, or to the interior of an edge of the triangle, so does not belong to $\Sigma$.
\end{proof}
\begin{proposition}\label{prop:arcexist}
Let $\mathcal{F}^i(\Sigma) \subset \mathcal{F}(\Sigma)$ be a connected component and $\alpha \in \mathcal{A}(\Sigma)$ an arc. Then there exists a triangulation $T_\alpha\in\mathcal{F}^i(\Sigma) $ containing $\alpha$. 
\end{proposition}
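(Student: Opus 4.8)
The plan is to start from an arbitrary triangulation $T \in \mathcal{F}^i(\Sigma)$ and to transform it into a triangulation containing $\alpha$ by a \emph{finite} sequence of flips, each of which strictly decreases the number of times $\alpha$ crosses the triangulation. I would put $\alpha$ in minimal position with respect to $T$ and set $n := i(\alpha,T)$, which is finite by Proposition \ref{prop:arcfinite}. Since a single flip (flipping one quadrilateral) is a special case of a simultaneous flip, every triangulation obtained from $T$ by finitely many single flips lies in the same connected component $\mathcal{F}^i(\Sigma)$. Hence it suffices to produce a finite chain $T=T_0, T_1, \dots, T_m$ of single flips with $i(\alpha,T_{k+1}) < i(\alpha,T_k)$ and $i(\alpha,T_m)=0$.

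The base case is the endpoint of the induction. If $i(\alpha,T')=0$ for some $T'$ in minimal position, then $\alpha$ is disjoint from every edge of $T'$, so it is contained in the closure of a single triangle of $T'$. A non-trivial simple arc inside a closed triangle joining two of its vertices is isotopic to the side joining them; thus $\alpha$ is isotopic to an edge of $T'$, that is, $\alpha \in T'$. Setting $T_\alpha := T'$ would then finish the argument.

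The heart of the proof is the reduction step: if $n \geq 1$, I want a flippable edge $e$ of $T$ (one bounding two distinct triangles) crossed by $\alpha$ whose flip strictly lowers $i(\alpha,\cdot)$. To locate it, I would trace $\alpha$ from one endpoint $p$: it crosses a first edge $e_1$, cutting off the corner $p$ of the triangle it first traverses, and continues into the adjacent triangle. Following $\alpha$ until it first exits the quadrilateral $Q$ spanned by the two triangles adjacent to $e_1$, one flips the diagonal $e_1$ to the opposite diagonal of $Q$; the initial segment of $\alpha$ can then be isotoped off the new diagonal inside $Q$, removing the crossing at $e_1$. This is the standard bigon/corner-reduction used for finite type surfaces (see \cite{Disarlo-Parlier19, Mosher}), which I would invoke essentially verbatim.

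I expect the main obstacle to be precisely this reduction step, where two points require care. First, one must check that the chosen flip does not increase the intersection with the remainder of $\alpha$; the clean way is to select an \emph{outermost} crossing, so that the triangle cut off by a segment of $\alpha$ meets no other part of $\alpha$, guaranteeing a net decrease of at least one. Second, the edge to be flipped must bound two \emph{distinct} triangles, so a self-folded (non-flippable) edge must be handled separately, typically by first flipping a neighbouring edge to unfold the configuration. By contrast, the passage to infinite type creates no new difficulty here: since $n=i(\alpha,T)$ is finite, $\alpha$ meets only finitely many triangles of $T$, so all flips take place inside a fixed finite union of triangles, the rest of the triangulation is never touched, and the process terminates after at most $n$ steps.
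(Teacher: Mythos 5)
Your proposal is correct and takes essentially the same route as the paper: both use Proposition \ref{prop:arcfinite} to confine $\alpha$ and the finitely many arcs of $T$ it crosses to a finite-type subsurface, and then perform finitely many flips inside that subsurface (a single flip being a simultaneous flip with $\mu$ a single arc), so everything stays in $\mathcal{F}^i(\Sigma)$. The paper's own proof simply says ``it suffices to flip in that subsurface,'' delegating the finite-type reduction to the standard literature, so your explicit induction on $i(\alpha,\cdot)$, including the cited corner-reduction from \cite{Disarlo-Parlier19,Mosher}, is just an unpacked version of the same argument.
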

\begin{proof}
For any $T\in \mathcal{F}^i(\Sigma) $,  $i(T,\alpha) < +\infty$. Thus there is a finite type subsurface in which $\alpha$ and the subset of $T$ intersected by $\alpha$ both live. It suffices to flip in that subsurface to obtain a triangulation containing $\alpha$. 
\end{proof}

This implies that for any finite multiarc and any given connected component, there is a triangulation containing that multiarc. This is no longer true for infinite multiarcs. In particular, as mentioned previously, in the sequel there will be examples of triangulations that are not related by any finite number of simultaneous flips. 

The following proposition is the easy part of Theorem \ref{thm:A} and it states that if there is no bound on the intersection between arcs of a triangulation $T$ and an other triangulation $T'$, then they cannot be related by flips.

\begin{proposition}\label{prop:nocanflip}
Let $T,T' \in \FF(\Sigma)$ be such that for any $K>0$, there exists $\alpha \in T$ such that $i(\alpha,T') \geq K$, then $T$ and $T'$ cannot be related by a finite number of simultaneous flips.
\end{proposition}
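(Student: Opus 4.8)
The plan is to attach to each triangulation a quantity that can change only by a bounded multiplicative factor under a single simultaneous flip, and that is finite for $T'$ but infinite for $T$. For a triangulation $R$, set $M(R) = \sup_{\alpha \in R} i(\alpha, T')$, a value in $[0,+\infty]$ which is well defined by Proposition \ref{prop:arcfinite}. The hypothesis is exactly that $M(T) = +\infty$, whereas $M(T') = 0$ since the arcs of $T'$ are pairwise disjoint. So it suffices to prove a key lemma: there are universal constants $c \ge 1$ and $c' \ge 0$ such that $M(f_\mu(R)) \le c\,M(R) + c'$ for every triangulation $R$ and every simultaneous flip $f_\mu$.

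Granting the lemma, suppose $T$ and $T'$ were joined by a finite sequence of flips $T = R_0, R_1, \ldots, R_n = T'$. Each reverse step $R_{j+1} \to R_j$ is again a single simultaneous flip (flip back the arcs just created; the quadrilaterals are the same regions and remain pairwise distinct), so the lemma gives $M(R_j) \le c\,M(R_{j+1}) + c'$. Iterating from $R_n = T'$ down to $R_0 = T$ yields $M(T) \le c^n M(T') + c'(1 + c + \cdots + c^{n-1})$, which is finite since $M(T') = 0$, contradicting $M(T) = +\infty$. Hence no such finite sequence exists.

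For the key lemma, the arcs of $R' = f_\mu(R)$ are of two kinds. An arc $\alpha$ that is not flipped is unchanged, so $i(\alpha, T') \le M(R)$. A flipped arc $a'$ is the new diagonal of the quadrilateral $Q_a$ whose two triangles previously shared the arc $a \in \mu$; its four sides $e_1, e_2, e_3, e_4$ and the old diagonal $a$ are all arcs of $R$. I would put $T'$ in minimal position and bound $i(a', T')$ by charging each crossing of $a'$ with $T'$ to the sides and old diagonal of $Q_a$: since $a'$ is homotopic rel endpoints, inside the disk $Q_a$, to a boundary path running along two consecutive sides around a corner, every essential crossing of $a'$ comes from a crossing of $T'$ with one of $e_1, \ldots, e_4$ or $a$, up to the single possible crossing of $a'$ with an arc of $T'$ isotopic to $a$ itself. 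This gives $i(a', T') \le c\,M(R) + c'$ with small universal constants, and hence the same bound for $M(R')$, because the estimate is local to each quadrilateral and the quadrilaterals of a simultaneous flip are disjoint.

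The main obstacle is precisely this local count. One must argue in minimal position that a segment of $T'$ inside $Q_a$ crosses $a'$ at most once, and then handle the segments having an endpoint at an ideal vertex (corner) of $Q_a$: all but at most one such segment can be charged to a crossing with a side, and the exceptional corner-to-corner segment accounts for the additive constant coming from $a'$ crossing $a$. Tracking the sharpest constants (one expects a bound of the form $M(f_\mu(R)) \le 4\,M(R)+1$) is what yields the quantitative base-$4$ estimate of Corollary \ref{cor:upperandlower}, but for the present qualitative statement only the existence of a finite affine bound is needed.
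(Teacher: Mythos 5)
Your proposal is correct, and its heart is the same local estimate as the paper's: every crossing of the (new or old) diagonal of a flipped quadrilateral $Q_a$ with $T'$ is charged to a crossing of $T'$ with one of the four sides of $Q_a$, with a $+1$ for the one exceptional arc of $T'$ isotopic to the other diagonal, giving the affine bound with $c=4$, $c'=1$ --- this is precisely inequality (\ref{eq:induction}). Where you differ is in the bookkeeping. The paper runs the estimate \emph{forward} from $T$: it pre-chooses $K=\frac{1}{3}(4^{N+1}-1)$, takes a witness arc $\alpha$ with $i(\alpha,T')\geq K$, and inductively produces arcs $\alpha_i \in T_i$ whose intersection with $T'$ decays by at most a factor of $4$ (plus $1$) per flip, so that after $N$ flips some arc still meets $T'$ and hence $T_N \neq T'$. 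You instead run it \emph{backward} from $T'$ via the invariant $M(R)=\sup_{\alpha\in R} i(\alpha,T')$, using $M(T')=0$ and $M(T)=+\infty$ to get a contradiction; this avoids choosing $K$ in advance and avoids the witness-arc induction, at the modest cost of the extra (easy, and correctly noted) observation that the inverse of a simultaneous flip is again a simultaneous flip. Amusingly, your sup-based packaging is exactly how the paper proves the easy direction of Theorem \ref{thm:A} itself ($i(\alpha,T_0)\leq 4\max_{\beta\in T_{i-1}} i(\beta,T_0)+1$, whence the bound $\frac{4^k-1}{3}$), so the two proofs are genuine duals of one another, and both recover the same quantitative $\log_4\left(3(K+1)\right)$ lower bound of Corollary \ref{cor:lower}. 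Your deferred local count is also sound as sketched: realizing all arcs geodesically (say for a hyperbolic metric with the vertices as cusps/ideal points) puts everything simultaneously in minimal position, a segment of $T'$ in the disk $Q_a$ then meets the diagonal at most once, and the only segment crossing the new diagonal that touches no side is the corner-to-corner one isotopic to $a$, of which a triangulation contains at most one; the disjointness of the quadrilaterals is not even needed for the bound, since the sides of each $Q_a$ are arcs of $R$ regardless.
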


\begin{proof}
For an integer $N>0$, consider a sequence of triangulations obtained from $T$ by flipping $N$ times. We denote the sequence by $T_0=T,\hdots,T_N$.

We set $K=\frac{1}{3}(4^{N+1}-1)$ for reasons that will become apparent in what follows, and let $\alpha$ be an arc with $i(\alpha,T')\geq K$. We will construct a sequence of arcs $\alpha_i\in T_i$ (by induction) for each $i=1,\hdots,N$, such that 
$$
i(\alpha_i,T')\geq \frac{1}{3}(4^{N+1-i}-1).
$$
This will show that $T_N\neq T'$, as $i(\alpha_N,T')>0$. 

We set $\alpha_0$ to be $\alpha$ to begin the induction.

At any stage, if the arc $\alpha_i$ does not belong to those that are flipped, we set $\alpha_{i+1}:=\alpha_i$ (and in particular its intersection number with $T'$ remains unchanged). If it does belong to an arc that is flipped, then $\alpha_i$ belongs to a quadrilateral $Q_i$ with four boundary arcs, the collection of which we denote $\partial Q_i$. Now the key observation is that the quantity 
$$
\max_{\delta\in \partial Q_i}\{i(\delta,T')\}
$$
is bounded below by a linear function of $i(\alpha_i,T')$. This is simply because any arc that intersects $\alpha_i$ must then intersect $\partial Q_i$ in both directions, unless it terminates at a vertex of $Q_i$. An arc that does not terminate in $Q_i$ contributes 2 to the intersection between $\partial Q_i$ and $T'$, and at arc that terminates contributes $1$, unless it is the other diagonal of $Q_i$, but there is only such diagonal. 

Hence $i(\partial Q_i, T') \geq i(\alpha_i,T') -1$, where the $-1$ is to account for the diagonal. Now as $Q_i$ has $4$ boundary arcs:
\begin{equation}\label{eq:induction}
 \max_{\delta\in \partial Q_i}\{i(\delta,T')\} \geq \frac{i(\alpha_i,T') -1}{4}
 \end{equation}
as claimed.

We now set $\alpha_{i+1}$ to be an arc of $Q_i$ which realizes this maximal intersection. 

We now finish by observing that 
$$
i(\alpha_{i+1}, T') \geq  \frac{i(\alpha_i,T') -1}{4} \geq \frac{\frac{1}{3}(4^{N+1-i}-1)-1}{4} =  \frac{1}{3}(4^{N-i}-1)
$$
as claimed. We point out that the choice of $K$ came from the inductive step (inequality \ref{eq:induction} above), and the fact that 
$$
\sum_{j=1}^{N} 4^j = \frac{1}{3} (4^{N+1-i}-1).
$$
So after any sequence of $N$ flips leaving from $T$, the resulting triangulation always has an arc that continues to intersect $T'$, and hence cannot be $T'$. As this is true for any $N$, the two triangulations are never connected by a sequence of flips. 
\end{proof}

The above proof also results in the following quantitative statement.

\begin{corollary}\label{cor:lower}
If $T$ and $T'$ are such that an arc $\alpha$ of $T$ satisfies 
$$
i(\alpha,T')\geq K
$$
for $K\geq 1$, then the flip distance between $T$ and $T'$ is at least
$$
\log_4\left(3(K+1) \right).
$$
\end{corollary}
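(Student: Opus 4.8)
The plan is to recognise that Corollary \ref{cor:lower} is nothing more than the quantitative repackaging of Proposition \ref{prop:nocanflip}: the inductive one-step estimate built in that proof already controls how fast the intersection of a tracked arc with $T'$ can decay under a single flip, so the only task is to invert this relation and read off a lower bound on the number of flips. First I would dispose of the trivial case: if $T$ and $T'$ lie in different connected components of $\FF(\Sigma)$ their flip distance is infinite and there is nothing to prove. So I may assume they are joined by a flip path, let $d$ denote their flip distance, and fix a \emph{minimal} realising sequence $T=T_0,T_1,\dots,T_d=T'$.

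Next I would run the arc-tracking construction from the proof of Proposition \ref{prop:nocanflip} verbatim along this minimal sequence, starting from $\alpha_0=\alpha$. This produces arcs $\alpha_i\in T_i$ obeying the one-step estimate
\[
i(\alpha_{i+1},T')\ \ge\ \frac{i(\alpha_i,T')-1}{4},
\]
where the additive $-1$ records the single diagonal of each flipped quadrilateral. The new ingredient needed here is the terminal condition at the far end of the sequence: since $\alpha_d$ is an arc of $T_d=T'$, it is disjoint from $T'$, and hence $i(\alpha_d,T')=0$.

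Finally I would invert the recursion. Reading the one-step estimate backwards gives $i(\alpha_i,T')\le 4\,i(\alpha_{i+1},T')+1$, and iterating this from $i=d$ down to $i=0$, starting from $i(\alpha_d,T')=0$, telescopes into the geometric sum $\sum_{j=0}^{d-1}4^{\,j}=\tfrac13(4^{d}-1)$, so that $i(\alpha_0,T')\le \tfrac13(4^{d}-1)$ (equivalently, solving the linear recursion forward with its fixed point $-\tfrac13$ gives the same bound $i(\alpha_d,T')\ge \tfrac{3K+1}{3\cdot 4^{d}}-\tfrac13$). Combining with the hypothesis $i(\alpha,T')\ge K$ yields $K\le\tfrac13(4^{d}-1)$, that is $4^{d}\ge 3K+1$, and taking $\log_4$ produces the asserted logarithmic lower bound on $d$ after a one-line rearrangement. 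I do not expect a genuine obstacle here, since all the analytic content already sits in the inequality proved for Proposition \ref{prop:nocanflip}; the only points that need care are checking that the tracked arc can indeed be followed along a minimal flip path and that the terminal equality $i(\alpha_d,T')=0$ is forced by $\alpha_d\in T'$, with everything else being the bookkeeping of a single geometric series.
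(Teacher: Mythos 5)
Your strategy is exactly the paper's: the paper's two-line proof also just inverts the decay estimate from Proposition \ref{prop:nocanflip}, and your execution (tracking $\alpha_i$ along a minimal path, the backwards recursion $i(\alpha_i,T')\le 4\,i(\alpha_{i+1},T')+1$, the terminal condition $i(\alpha_d,T')=0$ forced by $\alpha_d\in T_d=T'$, and the telescoping geometric sum) is a correct and in fact more careful implementation of it. However, your final step does not deliver the stated constant, and no ``one-line rearrangement'' will: your chain gives $K\le i(\alpha_0,T')\le \tfrac13(4^d-1)$, hence $4^d\ge 3K+1$ and $d\ge \log_4(3K+1)$, whereas the corollary asserts $d\ge \log_4(3K+3)$. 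Since $\log_4(3K+1)<\log_4\bigl(3(K+1)\bigr)$ strictly, you have proved a genuinely weaker inequality, and integrality of intersection numbers does not close the gap: $4^d\equiv 1 \pmod 3$, so $4^d=3K+1$ is perfectly possible.

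The interesting twist is that the defect is not yours alone: the paper's own proof sets $N=\log_4\bigl(3(K+1)\bigr)$ in the criterion ``$d\ge N$ provided $i(\alpha,T')\ge\tfrac13(4^N-1)$,'' which for that choice of $N$ requires $K\ge \tfrac13\bigl(3(K+1)-1\bigr)=K+\tfrac23$ --- false. Indeed the stated bound fails at $K=1$: if $T'$ is obtained from $T$ by a single flip of an arc $\alpha$, then $i(\alpha,T')=1\ge K$ while $d=1<\log_4 6\approx 1.29$, whereas your bound $\log_4(3K+1)=\log_4 4=1$ is sharp there. So the correct conclusion of your (and the paper's) argument is $d\ge\log_4(3K+1)$; the only error in your proposal is the unsupported claim in the last sentence that this matches the constant as printed.
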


\begin{proof}
The argument above shows that you need at least $N$ flips to relate $T$ and $T'$ if 
$$
i(\alpha,T')\geq   \frac{1}{3} (4^{N}-1).
$$
By setting $N=\log_4\left(3(K+1) \right)$, the result follows.
\end{proof}

One ingredient in our proofs will be the use of projections to natural subgraphs consisting in the triangulations that contain a given multiarc. These projections were at least implicitly studied in \cite{Mosher}, and were studied in detail in \cite{Disarlo-Parlier19}, but in both setups, $\Sigma$ was of finite type and the flip graphs involved were the "usual" ones, where only a single flip was allowed.

For a given multiarc $\mu$ of $\Sigma$ we define $\FF_\mu^{i}(\Sigma)$ to be the graph of $\FF^i(\Sigma)$ spanned by all vertices $T$ containing $\mu$. If $\mu$ is finite, then $\FF_\mu^{i}(\Sigma)$ is always non-empty but otherwise, as mentioned above, this might not be the case.

The following result has been shown in \cite{Parlier-Weber} for finite type surfaces, and is true in this more general setting as well. 

\begin{proposition}\label{prop:convexity}
$\FF_\mu^{i}(\Sigma)$ is a convex subgraph of $\FF^{i}(\Sigma)$.
\end{proposition}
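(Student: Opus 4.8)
The plan is to establish \emph{geodesic} convexity: given $S,T\in\FF_\mu^i(\Sigma)$ and any geodesic $S=T_0,T_1,\dots,T_n=T$ in $\FF^i(\Sigma)$, I must show that every intermediate $T_k$ also contains $\mu$. My first move is to remove the difficulty coming from $\mu$ possibly being infinite. Since a triangulation contains $\mu$ exactly when it contains each arc $a\in\mu$, we have, as induced subgraphs, $\FF_\mu^i(\Sigma)=\bigcap_{a\in\mu}\FF_a^i(\Sigma)$, where $\FF_a^i(\Sigma)$ denotes the subgraph spanned by the triangulations containing the single arc $a$. An arbitrary intersection of geodesically convex subsets is again geodesically convex: if $z$ lies on a geodesic between two points of the intersection, then for each index the two endpoints lie in that convex piece, so $z$ does too. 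Hence it suffices to prove that $\FF_a^i(\Sigma)$ is convex for a \emph{single} arc $a$. This reduction is what tames the infinite type setting, since by Proposition~\ref{prop:arcfinite} we have $i(a,U)<+\infty$ for every triangulation $U$, so all the combinatorics relevant to $a$ takes place in a finite region.

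For a single arc $a$, I would follow the projection technique of \cite{Parlier-Weber,Disarlo-Parlier19}. The triangles of a triangulation $U$ met by $a$ form a finite chain (a triangulated polygonal neighbourhood of $a$), and flipping across this chain in the canonical way produces a distinguished triangulation $\pi_a(U)\supseteq a$. The goal is to show that $\pi_a$ realises $\FF_a^i(\Sigma)$ as a \emph{gated} subgraph: $\pi_a$ fixes $\FF_a^i(\Sigma)$ pointwise, and for every $V\supseteq a$ one has $d(U,V)=d(U,\pi_a(U))+d(\pi_a(U),V)$. Gatedness then yields convexity by the standard argument: if $z$ lies on a geodesic between $x,y\in\FF_a^i(\Sigma)$, then writing $g=\pi_a(z)$ and adding the two gate identities (with $V=x$ and $V=y$) gives $d(x,y)=d(z,x)+d(z,y)=2d(z,g)+d(g,x)+d(g,y)\ge 2d(z,g)+d(x,y)$, which forces $z=g\in\FF_a^i(\Sigma)$.

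The substantive point, and the step I expect to be the main obstacle, is verifying the gate identity in the presence of \emph{simultaneous} flips. A single edge of $\FF^i(\Sigma)$ may flip infinitely many disjoint quadrilaterals at once, of which only finitely many can meet $a$ (again by Proposition~\ref{prop:arcfinite}). I would therefore argue that flips supported away from the chain of triangles crossed by $a$ commute with $\pi_a$ and leave $i(a,\cdot)$ unchanged, so that along the geodesic only the relevant portion of each flip contributes. This is what allows the finite-type estimates of \cite{Parlier-Weber} to be applied inside a fixed finite subsurface, namely a regular neighbourhood of $a$ together with the finitely many arcs that ever cross it. The care needed is essentially bookkeeping: ensuring that this localization is compatible across all the finitely many steps of the geodesic at once, and that the resulting finite picture stays inside the single component $\FF^i(\Sigma)$, so that the finite-type gate estimate transfers without change.
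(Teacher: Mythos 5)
Your plan has a genuine gap at its crucial step: the gate identity $d(U,V)=d(U,\pi_a(U))+d(\pi_a(U),V)$ for all $V\supseteq a$ is not just unproven, it is false, already in the simplest finite example. Take $\Sigma$ a pentagon (with its five ideal vertices). Its flip graph is the $5$-cycle, and since no triangulation of the pentagon contains two flippable arcs with interior-disjoint quadrilaterals, this is also the simultaneous flip graph, so the example lives inside the setting of this paper. For a diagonal $a$, the subgraph $\FF_a$ consists of the two triangulations containing $a$, which form an edge $\{x,y\}$ of the $5$-cycle. The vertex $z$ opposite this edge satisfies $d(z,x)=d(z,y)=2$, while any candidate gate $g\in\{x,y\}$ gives $d(z,g)+d(g,\cdot)=2+1=3$ for the other endpoint. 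So no gate exists for $z$ --- no projection, combing or otherwise, can satisfy your identity. The obstacle you flagged as ``essentially bookkeeping'' is an impossibility: gatedness is strictly stronger than convexity, and here $\FF_a$ is convex (the edge is the unique geodesic between $x$ and $y$) yet not gated.

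This failure also takes down your first reduction. Writing $\FF_\mu^i(\Sigma)=\bigcap_{a\in\mu}\FF_a^i(\Sigma)$ and intersecting convex sets is only valid for \emph{strong} convexity (every geodesic between points of the set stays in the set), which is exactly what your gate argument was meant to supply. What the combing projection actually delivers --- and this is the paper's argument, following \cite{Disarlo-Parlier19,Parlier-Weber} --- is a $1$-Lipschitz retraction of $\FF^i(\Sigma)$ onto $\FF_\mu^i(\Sigma)$ fixing it pointwise: the image of a geodesic between $T,T'\in\FF_\mu^i(\Sigma)$ is a path of length at most $d(T,T')$ contained in $\FF_\mu^i(\Sigma)$, hence a geodesic inside the subgraph. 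This weaker notion of convexity is what the proposition means here and what its applications (e.g.\ Corollary \ref{cor:rank}) use, but it is not stable under intersections, so your two steps cannot be combined. Note also that the paper does not need your single-arc reduction to tame the infinite-type setting: it combs along all of $\mu$ at once, the only new point being the uniform bound $\sup_{\alpha\in\mu} i(\alpha,T)<+\infty$, which follows from Proposition \ref{prop:nocanflip} as soon as $\FF_\mu^i(\Sigma)$ is non-empty. If you want to keep your intersection-based outline, you would have to establish strong convexity of each $\FF_a^i(\Sigma)$ by a different mechanism (for instance the more delicate geodesic-combing analysis of \cite{Disarlo-Parlier19}), not via gates.
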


\begin{proof}
Note the theorem is true in the finite type setting (for simultaneous flips this is proved in \cite{Parlier-Weber} using the strategy for individual flips from \cite{Disarlo-Parlier19}).

The basic observation that allows one to adapt the proof for the finite type setting here is the following. We suppose $\FF_\mu^{i}(\Sigma)$ is non-empty, that is that there exists at least one triangulation of $\Sigma$ which contains $\mu$. By Proposition \ref{prop:nocanflip}, this means that for any $T\in \FF^{i}(\Sigma)$ we have
$$
\sup_{\alpha \in \mu}i(\alpha,T) < +\infty.
$$
We are going to project a triangulation $T$ to a triangulation containing $\mu$ by "combing" $T$ along $\mu$ (see Figure \ref{fig:Comb}).

\begin{figure}[h]
\begin{center}
\includegraphics[width=10cm]{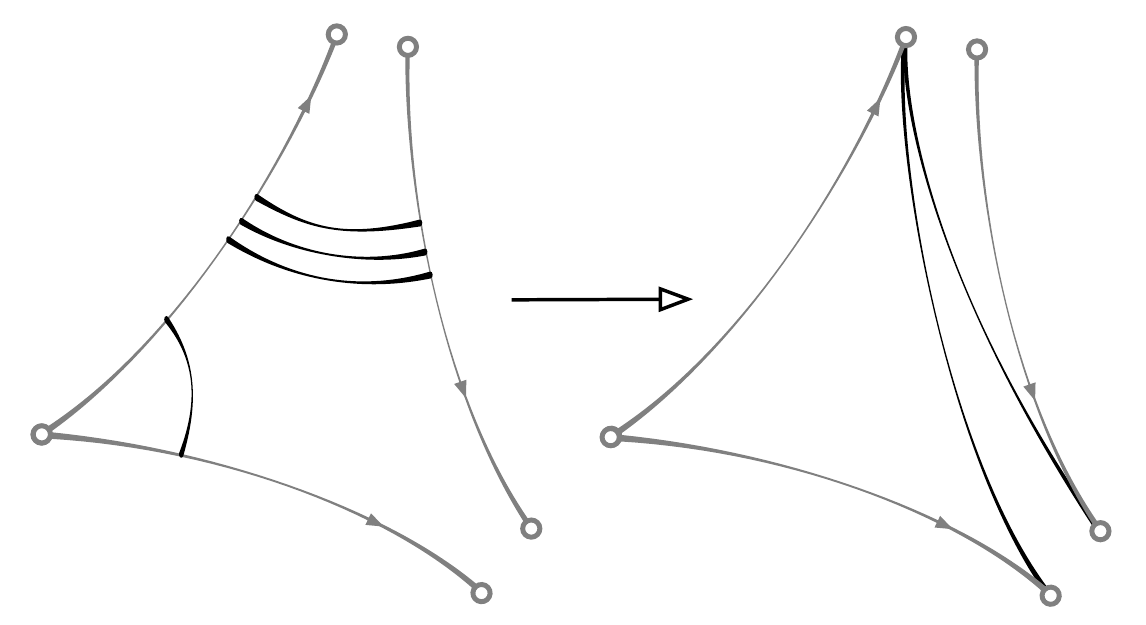}
\vspace{-24pt}
\end{center}
\caption{"Combing" a triangulation along an oriented multiarc}
\label{fig:Comb}
\end{figure}

The local picture will be the same as in the finite type setting as each arc of $\mu$ will intersect a bounded number of arcs of $T$. 

Consider a multiarc $\mu$, and give each of its arcs an orientation (the choice of orientation does not matter).

Now a combing projection works as follows. For a given $T\in \FF^{i}(\Sigma)$, each arc of $T$ is sent to a triangulation containing $\mu$ which is defined by "combing" $T$ along $\mu$ with the given orientation. Specifically one defines the map as follows: $T$ is intersected by $\mu$ and hence results in a collection of subarcs on which are the connected components of $\Sigma \setminus \mu$. For each such subarc $a$, we define an arc as follows: from an interior point of $a$ (that is a non terminal point), extend it in both directions until the terminal points of $a$. These are points of $\mu$ (possibly a marked point). If these are interior points of an arc $\alpha$ of $\mu$, continue the arc following $\alpha$ with its orientation to its terminal endpoint. This map clearly sends $a$ to an arc, and sends distinct subarcs $a,b$ to (interior) disjoint arcs. Note that different subarcs can be sent to the same arc.

One needs to check that the result is a triangulation, namely that the connected components of the complementary regions to the resulting multiarcs are all triangles. This is relatively straightforward to check: the rough argument is that otherwise there is a complementary region that is of greater (arc) complexity but which must have been intersected by arcs of $T$ or $\mu$. These arcs must have resulted in arcs that continue to intersect the region, hence this cannot be. A detailed argument in the finite type case can be found in both \cite{Disarlo-Parlier19} and \cite{Parlier-Weber}, and the same argument applies here.

Thus the combing map from $\FF^{i}(\Sigma)$ to $\FF_\mu^{i}(\Sigma)$ is well-defined on vertices. Now it suffices to show that edges are sent to edges. Roughly speaking, this can be deduced from the fact that quadrilaterals are sent to a collection of quadrilaterals, triangles or arcs, with at most one quadrilateral in the image. And if multiple quadrilaterals are disjoint, then their images are disjoint. Again, see \cite{Parlier-Weber} for a more detailed argument. 

Using this projection, convexity can be deduced from the observation that the combing map leaves points of $\FF_\mu^{i}(\Sigma)$ invariant. Given two triangulations $T,T'\in \FF_\mu^{i}(\Sigma)$, the image of geodesic under the combing map between them is a path between them of length at most $d(T,T')$ and entirely contained in $\FF_\mu^{i}(\Sigma)$.
\end{proof}

This proposition has an immediate consequence. Note that by a metric space of infinite rank we mean the absence of finite rank, meaning that for any positive integer $k$, it contains a quasi-convex copy $\Z^k$ with the usual metric. 

\begin{corollary}\label{cor:rank}
If $\Sigma$ is of infinite type, any connected component of the flip graph is of infinite diameter and infinite rank.
\end{corollary}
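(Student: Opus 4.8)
The plan is to deduce both statements from a single construction: for every $k\geq 1$ I would produce a quasi-isometrically embedded, quasiconvex copy of $\Z^k$ inside the component $\FF^{i}(\Sigma)$. Taking $k=1$ already gives infinite diameter, but it is worth recording that infinite diameter can be seen more directly. Fix $T\in\FF^{i}(\Sigma)$ and an essential simple closed curve $\gamma$; by the same accumulation argument as in Proposition \ref{prop:arcfinite}, $i(\gamma,T)<\infty$, so the Dehn twist $\phi$ along $\gamma$ is supported away from the ideal vertices and alters only finitely many arcs of $T$. Hence $\phi^{n}(T)\in\FF^{i}(\Sigma)$ for all $n$, while the standard twist inequality gives $i(\alpha,\phi^{n}(T))\to\infty$ for any arc $\alpha$ of $T$ crossing $\gamma$, and Corollary \ref{cor:lower} then forces $d(T,\phi^{n}(T))\to\infty$.

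For infinite rank, since $\Sigma$ is of infinite type it contains, for each $k$, pairwise disjoint finite-type subsurfaces $\Sigma_1,\dots,\Sigma_k$, each carrying an essential curve and each bounded by arcs of a fixed triangulation $T$; concretely I would choose the $\Sigma_j$ as unions of triangles of $T$ carrying disjoint essential curves. Let $\mu$ be the sub-multiarc of $T$ consisting of all arcs not lying in the interior of $\bigcup_j\Sigma_j$. Then $T\in\FF_\mu^{i}(\Sigma)$, and the triangulations containing $\mu$ are exactly those agreeing with $T$ outside $\bigcup_j\Sigma_j$ and varying freely inside; by Proposition \ref{prop:convexity}, $\FF_\mu^{i}(\Sigma)$ is convex in $\FF^{i}(\Sigma)$.

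The next step is to pin down the metric on $\FF_\mu^{i}(\Sigma)$. A flip preserving $\mu$ only flips quadrilaterals inside the $\Sigma_j$, and one simultaneous flip may act in several $\Sigma_j$ at once; interleaving the moves of the factors shows that $\FF_\mu^{i}(\Sigma)$ is isometric to the $\ell^\infty$-product $\prod_{j}\FF(\Sigma_j)$, the lower bound $d=\max_j d_{\FF(\Sigma_j)}$ coming from the combing projections onto each factor and the matching upper bound from parallelising the flips. Each $\Sigma_j$ is of finite type, so $\FF(\Sigma_j)$ is locally finite (each triangulation has finitely many flippable configurations), its mapping class group acts cocompactly (finitely many triangulations up to homeomorphism), and it has infinite diameter. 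These three facts let me extract a bi-infinite geodesic line $L_j\cong\Z$ in $\FF(\Sigma_j)$ by an Arzel\`a--Ascoli / K\"onig argument applied to geodesic segments whose midpoints are translated into a fundamental domain. The product $\prod_j L_j$ is then isometric to $(\Z^k,\ell^\infty)$, hence bi-Lipschitz to the standard $\Z^k$; being a product of geodesic lines it is convex in the $\ell^\infty$-product $\FF_\mu^{i}(\Sigma)$, and convexity of $\FF_\mu^{i}(\Sigma)$ in $\FF^{i}(\Sigma)$ upgrades this to quasiconvexity of the copy of $\Z^k$ inside $\FF^{i}(\Sigma)$.

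I expect the genuine obstacle to be the existence of \emph{linear} quasigeodesics in the factors. One cannot simply use twist orbits: consistent with Corollary \ref{cor:lower}, simultaneous flips compress a high power of a twist to logarithmically many flips, since a single flip can reduce the maximal intersection number by only a bounded factor, so $d(T,\phi^{n}(T))$ grows like $\log n$ and twist orbits are not quasi-isometric to $\Z$. The honest linear lines must therefore be produced by the abstract combination of local finiteness, cocompactness and infinite diameter rather than by an explicit twisting sequence. The remaining points are routine verifications: that disjoint essential curves can be carried by disjoint subcomplexes of $T$, that the combing projections of Proposition \ref{prop:convexity} really realise the $\ell^\infty$ metric on $\FF_\mu^{i}(\Sigma)$, and that a product of geodesic lines is quasiconvex in an $\ell^\infty$-product.
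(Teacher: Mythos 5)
Your proposal follows essentially the same route as the paper's own proof: there, one fixes $T$, chooses a multiarc $\eta\subset T$ cutting $\Sigma$ into finite-type pieces $\Sigma_k$, takes a bi-infinite geodesic $\gamma_k$ in each finite-type flip graph, and invokes the convexity of the subgraph of triangulations containing the multiarc (Proposition \ref{prop:convexity}) to obtain quasi-copies of $\Z^k$, with the same parenthetical remark that the ``diagonal'' ($\ell^\infty$) and usual metrics on $\Z^k$ are quasi-isometric for fixed $k$. Your deviations are minor ($k$ disjoint pieces for each $k$ rather than infinitely many at once, plus an independent twist-orbit argument for infinite diameter via Corollary \ref{cor:lower}, which is fine since $\phi^n(T)$ differs from $T$ in finitely many arcs and so stays in the same component). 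In two places you are in fact more careful than the paper: you supply an existence argument for the bi-infinite geodesics in the finite-type factors (local finiteness, cocompactness of the mapping class group action, and unbounded diameter, via a K\"onig-type limit), which the paper simply takes for granted; and you correctly note that twist orbits are only logarithmic quasigeodesics under simultaneous flips, so the lines cannot be produced by explicit twisting --- a genuine subtlety of the simultaneous-flip metric that the paper never has to confront because it never proposes twist orbits as lines.

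One sub-claim is false as stated: a product of geodesic lines is \emph{not} convex in an $\ell^\infty$ product. Already $\Z\times\{0\}$ fails to be convex, or even quasiconvex, in $(\Z^2,\ell^\infty)$: a geodesic of length $D$ from $(0,0)$ to $(D,0)$ may move the second coordinate away by $\lfloor D/2\rfloor$ and back, since the max-metric charges nothing for moves in the slack factor; the identical phenomenon occurs in $\prod_j\FF(\Sigma_j)$ whenever the coordinate displacements are unequal. What your construction genuinely yields is that $\prod_j L_j$ is \emph{isometrically embedded}: by Proposition \ref{prop:convexity} the ambient distance between triangulations containing $\mu$ equals the distance in $\FF_\mu^{i}(\Sigma)$, which on the product of lines is exactly the $\ell^\infty$ metric on $\Z^k$, bi-Lipschitz to the standard one --- and that suffices for a quasi-isometrically embedded copy of $\Z^k$. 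Upgrading to quasiconvexity of the image (all ambient geodesics between its points staying uniformly close) requires an argument beyond anything you wrote; note also that a $1$-Lipschitz retraction, such as the combing map, does not by itself imply quasiconvexity (the $x$-axis in $(\R^2,\ell^\infty)$ is a $1$-Lipschitz retract but not quasiconvex). In fairness, the paper's own proof ends with the same one-line assertion that convexity of $\FF_\mu^{i}(\Sigma)$ ``gives'' the quasi-copies, so this is a shared looseness at the final step rather than a divergence from the paper's argument; but since you made the convexity claim explicit, it is explicitly wrong and should be replaced by the isometric-embedding statement.
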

\begin{proof}
This follows from the fact that given any triangulation $T\in \FF^{i}(\Sigma)$, you can choose a multiarc $\eta\subset T$ that separates the surface into infinitely many finite type surfaces each with topology:
$$
\Sigma \setminus \eta = \dot\cup_{k\in \N} \Sigma_k.
$$
 We can now take a bi-infinite geodesic $\gamma_k$ on each of the flip-graphs of the finite type subsurfaces $\Sigma_k$. These are naturally collection of multiarcs that live on $\Sigma$ through the inclusion of $\Sigma_k$ in $\Sigma$. This gives rise to a collection of triangulations in $\FF^{i}(\Sigma)$ by choosing a point of $\gamma_k$ for all $k\in \N$ and taking the union with $\eta$. The convexity of $\FF_\mu^{i}(\Sigma)$ means that this is a convex subset giving us quasi-copies of $\Z^k$ for any $k\in \N$ (the "usual" metric and the "diagonal" metric are quasi-isometric).
\end{proof}

\section{The proof of Theorem \ref{thm:A}}\label{sec:mainproof}

This section is dedicated to proving Theorem \ref{thm:A}. We begin with a lemma, where we explicitly need a property of the combing projection from Proposition \ref{prop:convexity}.

\begin{lemma}\label{lem:intersect}
Let $T$ and $T'$ be triangulations of $\Sigma$ and let $\alpha$ be an oriented arc of $T$. Let $T'_{\alpha}$ be the combing projection of $T'$ onto $\mathcal{F}_{\alpha}(\Sigma)$. Then
$$\sup\{i(\beta',T) \, : \, \beta' \in T'_{\alpha}\} \leq \sup\{i(\beta,T) \, : \, \beta \in T'\}$$

and for every $\gamma \in T$, $i(\gamma,T'_{\alpha}) \leq i(\gamma,T')$. 

\end{lemma}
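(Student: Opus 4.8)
The plan is to work throughout with explicit representatives in minimal position and to read off both inequalities from the concrete structure of a combed arc produced in the proof of Proposition \ref{prop:convexity}. Recall that each arc $\beta'$ of $T'_{\alpha}$ is built from a subarc $a$ of a single arc $\beta\in T'$ (a connected component of $\beta\setminus\alpha$) by attaching, at each endpoint of $a$ lying on the interior of $\alpha$, a ``tail'' running parallel to $\alpha$ towards the terminal endpoint of $\alpha$ fixed by the orientation. The key observation is that the entire tail can be realized inside an arbitrarily thin regular neighborhood $N$ of $\alpha$. Since $\alpha\in T$, every other arc of $T$ is disjoint from $\alpha$, so for $N$ thin enough the tails meet $T$ only at the endpoint of $\alpha$, which is a marked point and hence produces no transverse crossing. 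This already yields the first inequality: realizing $T'$ in minimal position with $T$, the combed representative of $\beta'$ crosses $T$ only along its subarc part $a\subseteq\beta$, so that $i(\beta',T)\le i(\beta,T)\le \sup\{i(\beta,T):\beta\in T'\}$, and taking the supremum over $\beta'\in T'_{\alpha}$ gives the claim.

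For the second inequality, fix $\gamma\in T$. If $\gamma=\alpha$ there is nothing to prove, since $\alpha\in T'_{\alpha}$ forces $i(\alpha,T'_{\alpha})=0$; so assume $\gamma\neq\alpha$, in which case $\gamma$ is disjoint from $\alpha$ because both are arcs of $T$. Realize $T'$ in minimal position with the multiarc $\alpha\cup\gamma$, which is possible precisely because $\alpha$ and $\gamma$ are disjoint; this makes $T'$ minimal with respect to each of $\alpha$ and $\gamma$ separately. Comb $T'$ along $\alpha$ in this realization. The crossings between $\gamma$ and the resulting representative of $T'_{\alpha}$ come only from the subarc parts: the tails live in the thin neighborhood $N$, which is disjoint from $\gamma$ away from the endpoints of $\alpha$, and no crossing is created at those endpoints (treated below). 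Since $\gamma$ is disjoint from $\alpha$, cutting $T'$ along $\alpha$ neither creates nor destroys crossings with $\gamma$, so the subarc parts carry exactly the $i(\gamma,T')$ crossings of $T'$ with $\gamma$. Choosing one combed representative per arc of $T'_{\alpha}$, the total number of crossings with $\gamma$ is therefore at most $i(\gamma,T')$ (it can only drop when distinct subarcs comb to the same arc of $T'_{\alpha}$). As $i(\gamma,T'_{\alpha})$ is the minimum of such counts over all representatives, we conclude $i(\gamma,T'_{\alpha})\le i(\gamma,T')$.

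The step demanding the most care, and the main obstacle, is the behaviour of the tails near the terminal endpoint $p$ of $\alpha$, where all of them accumulate. One must verify that they can be routed so as to avoid transverse intersections with $\gamma$ (for the second inequality) and with the remaining arcs of $T$ (for the first). This is a purely local matter at $p$: the tails approach $p$ along the direction of $\alpha$, whereas $\gamma$ and the other arcs of $T$, being disjoint from $\alpha$, approach $p$ (if at all) in directions separated from that of $\alpha$ in the cyclic order of arc-ends at $p$. Ordering the tails compatibly with the orientation of $\alpha$ then lets them terminate at $p$ without crossing these arcs. Once this local picture is in place, the global counting of the previous paragraphs goes through verbatim and both inequalities follow.
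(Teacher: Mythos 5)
Your proof is correct and takes essentially the same route as the paper: the paper's (much terser) proof likewise decomposes each arc $\beta'$ of $T'_{\alpha}$ as either $\alpha$, an arc of $T'$, or the concatenation of a subarc of some $\beta\in T'$ with a subarc of $\alpha$, and deduces both inequalities from $i(\alpha,T)=0$, so that tails contribute no crossings and $i(\beta',T)\leq i(\beta,T)$. Your minimal-position bookkeeping and the local analysis of the tails at the terminal endpoint of $\alpha$ simply make explicit what the paper leaves implicit.
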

	
\begin{proof}
Note that the above quantities could be infinite. By definition, every arc $\beta'$ of $T'_{\alpha}$ is either $\alpha$, an arc of $T'$ or the concatenation of a subarc of a $\beta\in T'$ and a subarc of $\alpha$. In the first two cases, the lemma is obvious, and in the final case, the lemma follows as $i(\alpha,T)=0$ and thus $
i(\beta',T) \leq i(\beta,T).
$\end{proof}

\begin{lemma}\label{lem:coloring}
There exists a function $f:\N \to \N$ such that the following holds. Let $S$ and $T$ be two triangulations of $\Sigma$ satisfying
$$\max\{i(\alpha,T),i(\beta,S) \, : \, (\alpha,\beta) \in S \times T\} \leq K$$
for some finite $K$. Then there exist multiarcs $\mu_1, \ldots, \mu_{f(K)}$ of $S$ such that
\begin{enumerate}
\item $\displaystyle S= \bigcup_{i=1}^{f(K)} \mu_i$ and
\item if $i(\alpha,\gamma) > 0$ for $\alpha \in \mu_i$ and $\gamma \in T$, then $i(\alpha', \gamma)=0$ for all $\alpha' \in \mu_i\setminus\{\alpha\}$.
\end{enumerate}
The function can be taken to be 
$$
f(K)= 2 \cdot 3^K - 1.
$$

\end{lemma}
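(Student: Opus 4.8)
The goal is to partition $S$ into multiarcs $\mu_1,\ldots,\mu_{f(K)}$ so that within each $\mu_i$, no two arcs both cross a common arc $\gamma$ of $T$. This is naturally a proper coloring of an auxiliary graph $G$ on vertex set $S$: join two arcs $\alpha,\alpha'$ of $S$ by an edge precisely when there is some $\gamma\in T$ with $i(\alpha,\gamma)>0$ and $i(\alpha',\gamma)>0$. A proper coloring of $G$ with color classes $\mu_1,\ldots,\mu_m$ gives exactly the desired decomposition, since two arcs receiving the same color are non-adjacent, hence share no crossing arc of $T$. So the whole task reduces to bounding the chromatic number of $G$ in terms of $K$.

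**The key step is bounding the maximum degree of $G$.** I would argue that each $\alpha\in S$ has degree at most $3^K-1$ in $G$ (or some comparable bound). The reason is that $\alpha$ crosses at most $i(\alpha,T)\leq K$ arcs of $T$; call these $\gamma_1,\ldots,\gamma_k$ with $k\leq K$. Any neighbor $\alpha'$ of $\alpha$ must cross at least one of these same $\gamma_j$. So I need to count how many arcs of $S$ can cross a single fixed arc $\gamma$ of $T$: that count is exactly $i(\gamma,S)\leq K$, giving at most $K-1$ other arcs of $S$ crossing each $\gamma_j$. A crude union bound over the $k\le K$ values of $j$ yields $\deg_G(\alpha)\leq K(K-1)$, which would give a polynomial, not the stated $2\cdot 3^K-1$. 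The exponential in the target $f(K)$ strongly suggests the authors track a finer invariant than raw degree — likely a bound on how the crossing pattern branches, counting the arcs of $S$ reachable through a bounded-length alternating chain of crossings, which grows like $3^K$. I would first try the simple degree bound and Brooks' theorem, and only if the exponential is genuinely needed reconstruct the branching count.

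**Then apply Brooks' theorem.** Once I have a bound $\Delta$ on the maximum degree of $G$, Brooks' theorem gives $\chi(G)\leq\Delta$ unless a connected component of $G$ is complete or an odd cycle, in which case $\chi(G)\leq\Delta+1$. Taking $f(K)=\Delta+1$ to be safe produces the decomposition. To match the stated $f(K)=2\cdot 3^K-1$, I would aim to show $\Delta\leq 2\cdot 3^K-2$, i.e. that each arc of $S$ is adjacent in $G$ to at most $2\cdot 3^K-2$ others. One subtlety with infinite type: $G$ may be an infinite graph, but Brooks' theorem (and proper coloring with finitely many colors) holds for infinite graphs of bounded degree via a standard compactness argument, so finiteness of $\Delta$ rather than of $S$ is what matters here; this is where Proposition~\ref{prop:arcfinite} and the hypothesis $i(\alpha,T),i(\beta,S)\leq K$ ensure local finiteness.

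**The main obstacle** is pinning down the exact combinatorial quantity that produces $3^K$ rather than a polynomial bound. The factor $3$ and the exponent $K$ together hint that the authors count configurations where an arc of $S$ crosses up to $K$ arcs of $T$, and each such crossing can be "continued" in a constant number (three, reflecting the triangle structure: an arc leaving a triangle through one side can re-enter through the two others) of ways — so the relevant degree counts walks in the intersection pattern of bounded combinatorial length, giving a geometric-series bound $\sum_{j} 3^j \sim 3^K$, echoing the $4^N$ geometric estimate already used in Proposition~\ref{prop:nocanflip}. Getting this branching factor exactly right, and confirming that Brooks' theorem applies cleanly to the resulting possibly-infinite bounded-degree graph $G$, is the crux; everything else is bookkeeping.
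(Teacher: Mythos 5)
Your proposal is correct, but it takes a genuinely different --- and in fact quantitatively sharper --- route than the paper, and your worry that the exponential bound must be reconstructed is unfounded. The paper does not color your conflict graph directly: it introduces the graph $G$ whose vertices are the arcs of $S$, with an edge when two arcs lie on the boundary of a common triangle of $S$ (so the maximum degree of $G$ is at most $4$), observes that two arcs of $S$ crossed by a common arc $\beta$ of $T$ are at $G$-distance at most $K-1$ (since $\beta$ crosses at most $K-2$ intermediate arcs of $S$), and then applies Brooks' theorem to the $K$-th power $G^K$, whose maximum degree is bounded by the size of a radius-$K$ ball in a degree-$4$ graph, namely $4(1+3+\cdots+3^{K-1})=2\cdot 3^K-2$. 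So the $3^K$ is simply ball growth in a degree-$4$ graph, not the alternating-chain branching count you hypothesized. Your direct argument --- color the graph on $S$ where $\alpha\sim\alpha'$ iff some $\gamma\in T$ crosses both, with degree at most $K(K-1)$ because $\alpha$ meets at most $K$ arcs of $T$ and each of those meets at most $K-1$ further arcs of $S$ --- is valid and yields $f(K)=K(K-1)+1\leq 2\cdot 3^K-1$, which gives the stated conclusion (split or pad color classes to reach exactly $2\cdot 3^K-1$ multiarcs); there is no need to inflate your degree bound to $2\cdot 3^K-2$. Your caution about infinite graphs is well placed and applies equally to the paper, which cites only the finite version of Brooks' theorem: in both arguments the coloring transfers to the infinite, locally finite graph by De Bruijn--Erd\H{o}s compactness (and for your purposes even the greedy bound $\Delta+1$ on finite subgraphs suffices). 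What the paper's construction buys instead is a conclusion stronger than the lemma's statement: same-colored arcs are at $G$-distance at least $K+1$, a separation depending only on $S$ and $K$, hence valid against \emph{any} triangulation whose arcs each cross $S$ at most $K$ times --- convenient in the proof of Theorem \ref{thm:A}, where property (2) must hold against the successively combed triangulations $T_{i-1}$ rather than just $T$. Your version recovers the same persistence via Lemma \ref{lem:intersect}, since combing only shrinks the set of arcs of $S$ that a given arc crosses; and, as a bonus, your polynomial $f$ would improve the flip-distance bound of Corollary \ref{cor:upper} from $2K^2\cdot 3^K-K^2$ to $K^2(K^2-K+1)$.
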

	
\begin{proof}
Consider the graph $G$ whose vertices are the arcs of $S$ and where two vertices are joined by an edge when the corresponding arcs belong to the boundary of the same triangle. Note that the degree of all vertices is uniformly bounded by 4. We can define a distance between two arcs of $S$ as the combinatorial distance of the corresponding vertices of $G$. 
		
Let $\alpha_1$ and $\alpha_2$ be two arcs of $S$ both intersected by some arc $\beta$ of $T$. The arc $\beta$ induces a path in $\Sigma$ from some point in the interior of $\alpha_1$ to some point in the interior of $\alpha_2$. Hence, this path crosses, by hypothesis, at most $K-2$ other arcs of $S$. Thus, the distance between $\alpha_1$ and $\alpha_2$ in $G$ is at most $K-1$.
		
Now take a multiarc $\mu$ of $S$ such that the distance in $G$ for every pair of different arcs of $\mu$ is at least $K$. This forces any arc of $T$ to intersect at most one arc in $\mu$.
		
Finally, consider the $K-th$ power of $G$ (the graph $G^K$ having the same vertex set as $G$ and where two vertices are joined by an edge if their distance in $G$ is at most $K$). The degree of all vertices of $G^K$ is uniformly bounded by $2\cdot 3^K -2$. By Brooks' coloring theorem \cite{Brooks} there exists a $(2\cdot 3^K -1)$-coloring of the vertices of $G^K$ such that any two adjacent vertices are of a different color. We can now define $\mu_1, \ldots, \mu_{f(K)}$ to be the monochromatic multiarcs and thus $f(K) \leq 2\cdot 3^K - 1$.   
\end{proof}

We can now proceed to the proof.

\begin{proof}[Proof of Theorem \ref{thm:A}.] 
		
Suppose first that $S$ and $T$ are two maximal triangulations in the same connected component of $\mathcal{F}(\Sigma)$ and consider a path $S=T_0, T_1, \ldots T_k=T$ joining them. By definition, $i(\alpha,T_0) \leq 1$ for all $\alpha \in T_1$. Note that if $\alpha$ belongs to $T_i$, every arc of $T_0$ crossing $\alpha$ either is the other diagonal of the quadrilateral of $T_i$ containing $\alpha$ or it crosses at least one of the boundary arcs of this quadrilateral. Thus, $i(\alpha, T_0) \leq 4 \max\{i(\beta,T_0) \, : \, \beta \in T_{i-1}\} + 1$. Hence, for all $\alpha \in T_k$ the intersection with $S$ is at most $\frac{4^k-1}{3}$. The same procedure applies to the path $T=S_0, T_{k-1}=S_1, \ldots, S_k=T_0=S$ showing that for all $\beta \in S$ the intersection with $S$ is also at most $\frac{4^k-1}{3}$.
		
On the other direction, suppose now that there exists $K \geq 0$ such that all arcs $\alpha$ of $S \cup T$ satisfy $i(\alpha, S \cup T) \leq K$. We now apply Lemma \ref{lem:coloring} to decompose the maximal multiarc $S$ into $\mu_1,\ldots,\mu_{f(K)}$. To do so we give every arc of $\mu_i$ an orientation. For $i \in \{1,\ldots, f(K)\}$ we define $T_i$ as the combing projection of $T_{i-1}$ along (oriented) $\mu_i$, with $T_0=T$. To see that such a simultaneous combing projection exists, let $\alpha, \alpha'$ be two different arcs of $\mu_i$ and denote $\Sigma_\alpha$ (respectively $\Sigma_{\alpha'}$) the subsurface of $\Sigma$ spanned by all triangles of $T_{i-1}$ intersecting $\alpha$ (respectively $\alpha'$). Then the interiors of $\Sigma_{\alpha}$ and $\Sigma_{\alpha'}$ are disjoint as a consequence of lemmas \ref{lem:intersect} and \ref{lem:coloring}. Furthermore, any arc of $T_{i-1}$ intersects at most one of $\alpha, \alpha'$. In addition the complexities of $\Sigma_\alpha$ are uniformly bounded because they can be triangulated with at most $K$ triangles. Hence, the total intersection number of $T_{i-1} \cap T_i \cap \Sigma_\alpha$ is bounded above by $K^2$ and the distance in the corresponding flip graphs is at most $K^2$ (see for instance Corollary 2.13 in \cite{Disarlo-Parlier19}). As these surfaces $\Sigma_\alpha$ have disjoint interiors, those flips can be done simultaneously, hence there is a path in $\mathcal{F}(\Sigma)$ between $T_{i-1}$ and $T_i$ of length at most $K^2$. This completes the proof.
\end{proof}

As a corollary to the above proof, we get the following quantitative statement (which does not require the surface to be of infinite type in any way). 

\begin{corollary}\label{cor:upper}
For a constant $K\geq 0$, let $S$ and $T$ be triangulations of a surface $\Sigma$ such that for every $\alpha$ arc of $S$ and every $\beta$ arc of $T$ the intersection numbers $i(\alpha,T)$ and $i(\beta,S)$ are bounded above by $K$. Then the simultaneous flip distance between $T$ and $S$ is bounded above by 
$$
2 \,K^2 \cdot 3^K - K^2.
$$
\end{corollary}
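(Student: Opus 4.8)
The plan is to extract the quantitative bound directly from the constructive argument already used in the ``if'' direction of Theorem \ref{thm:A}, being careful to track the length of the path produced. First I would recall the structure of that proof: starting from $T_0 = T$, we perform a sequence of $f(K)$ combing projections, one for each monochromatic multiarc $\mu_i$ in the decomposition $S = \bigcup_{i=1}^{f(K)} \mu_i$ supplied by Lemma \ref{lem:coloring}. After all $f(K)$ stages the resulting triangulation contains all of $S$ and hence equals $S$. So the total flip distance is the sum, over $i = 1, \ldots, f(K)$, of the number of flips needed to pass from $T_{i-1}$ to $T_i = \text{comb}_{\mu_i}(T_{i-1})$.

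The key estimate is the per-stage cost. For a fixed stage $i$, each arc $\alpha \in \mu_i$ has a subsurface $\Sigma_\alpha$ (spanned by the triangles of $T_{i-1}$ meeting $\alpha$) which is triangulated by at most $K$ triangles, and by Lemma \ref{lem:coloring}(2) these subsurfaces have disjoint interiors and no arc of $T_{i-1}$ crosses more than one of them. Inside each $\Sigma_\alpha$ the combing is realized by flips whose number is bounded by $K^2$ (using that the relevant intersection number is at most $K^2$, together with the finite-type distance bound from Corollary 2.13 of \cite{Disarlo-Parlier19}). Since the $\Sigma_\alpha$ are interior-disjoint, all these flips are performed simultaneously, so the single simultaneous-flip step from $T_{i-1}$ to $T_i$ costs at most $K^2$ flips. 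Multiplying the per-stage bound $K^2$ by the number of stages $f(K) = 2 \cdot 3^K - 1$ gives
$$
K^2 \cdot (2 \cdot 3^K - 1) = 2\,K^2 \cdot 3^K - K^2,
$$
which is exactly the claimed upper bound.

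The main obstacle I anticipate is not the arithmetic but the bookkeeping needed to justify that the per-stage cost genuinely stays at $K^2$ at \emph{every} stage, rather than only at the first. The subtlety is that $T_{i-1}$ is itself a combing projection, not the original $T$, so one must verify that its arcs still satisfy the relevant intersection bounds against the multiarc $\mu_i$ being combed. This is precisely where Lemma \ref{lem:intersect} is invoked: it guarantees that combing projections do not increase the relevant intersection quantities, so the hypothesis $i(\alpha, \gamma) \leq K$ that controls the size of $\Sigma_\alpha$ is preserved through the whole sequence of projections. I would state clearly that the disjointness of the $\Sigma_\alpha$ within each stage is exactly property (2) of Lemma \ref{lem:coloring}, so that simultaneity is legitimate and we pay only $K^2$ (not $K^2$ times the number of arcs in $\mu_i$) per stage. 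With those two inputs in hand the corollary follows immediately, and I would keep the write-up short by referring back to the proof of Theorem \ref{thm:A} rather than repeating the combing construction.
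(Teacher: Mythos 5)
Your proposal is correct and follows essentially the same route as the paper's own argument: the paper's proof of Corollary \ref{cor:upper} likewise extracts the bound from the proof of Theorem \ref{thm:A}, multiplying the $f(K) \leq 2\cdot 3^K - 1$ combing stages from Lemma \ref{lem:coloring} by the per-stage cost of at most $K^2$ simultaneous flips (via the disjoint subsurfaces $\Sigma_\alpha$ and the finite-type distance bound of Corollary 2.13 in \cite{Disarlo-Parlier19}). Your additional care in invoking Lemma \ref{lem:intersect} to confirm that the intersection bounds persist through successive projections matches exactly what the paper does within the proof of Theorem \ref{thm:A}.
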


\begin{proof}
By Lemma \ref{lem:coloring} the function $f(K)$ satisfies 
$$
f(K)\leq 2 \cdot 3^K - 1
$$
and this is a bound on the number of "steps" necessary to get from $T$ to $S$. As argued above, each step requires at most $K^2$ (simultaneous) flips, hence the result. 
\end{proof}

\section{The topology of $\FF(\Sigma)$}\label{sec:connect}

We finish this paper with some observations about connected components of these flip graphs. Deformations spaces of infinite type surfaces generally have infinite numbers of connected components. For instance, the Teichm\"uller space of hyperbolic structures up to either quasi-conformal or bi-Lipschitz maps have this property (see \cite{Basmajian,Basmajian2,Liu-Papadopoulos}). One reason is because these maps only deform the lengths of geodesics by bounded amounts, and on infinite type surfaces, you have a lot of room to construct hyperbolic structures with wildly behaving length spectra \cite{Basmajian-Kim}.

In our setting, we use our result to show that our flip graphs have many connected components. 

\begin{corollary}\label{cor:notconnected}
For any $\Sigma$ of infinite type, $\mathcal{F}(\Sigma)$ has uncountably many connected components.
\end{corollary}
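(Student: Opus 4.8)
The plan is to use Theorem \ref{thm:A} to convert the topological question into a counting question about intersection numbers, and then to exhibit an explicit family of triangulations of cardinality the continuum, no two of which can be joined by flips. By Theorem \ref{thm:A}, two triangulations lie in different components exactly when the intersection numbers between the arcs of one and the other are unbounded, so it suffices to produce uncountably many triangulations that pairwise intersect in an unbounded way.

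First I would set up the scaffolding using the construction from Corollary \ref{cor:rank}. Fix any $T \in \mathcal{F}(\Sigma)$ and, as there, choose a multiarc $\eta \subset T$ with $\Sigma \setminus \eta = \dot\cup_{k \in \N} \Sigma_k$ a disjoint, locally finite union of finite-type pieces, each of which carries a bi-infinite geodesic in its own flip graph; in particular each $\mathcal{F}(\Sigma_k)$ has infinite diameter. Combining this with the contrapositive of the upper bound of Corollary \ref{cor:upper}: if every arc of a triangulation of $\Sigma_k$ met the restricted triangulation $T|_{\Sigma_k}$ in at most $K$ points, then its flip distance to $T|_{\Sigma_k}$ would be bounded by $2K^2 3^K - K^2$; since flip distance is unbounded along the geodesic, I can choose, for every $k$, a triangulation $\tau_k$ of $\Sigma_k$ relative to $\partial \Sigma_k \subset \eta$ possessing an arc $\gamma_k$ with $i(\gamma_k, T|_{\Sigma_k}) \geq k$.

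Next comes the encoding. I would fix an almost disjoint family $\mathcal{A} \subset \mathcal{P}(\N)$ of cardinality $\mathfrak{c}$ (infinite subsets of $\N$ with pairwise finite intersection; these exist by the standard construction of rational sequences converging to each real). For $A \in \mathcal{A}$ define $T_A$ by replacing $T|_{\Sigma_k}$ with $\tau_k$ for each $k \in A$ and leaving $T$ unchanged elsewhere; since each modification is local and the pieces are locally finite, $T_A$ is a genuine triangulation of $\Sigma$, and crucially every newly introduced arc lives inside a single piece, so intersection numbers between these triangulations are computed piece by piece. Now for distinct $A, B \in \mathcal{A}$ the set $A \setminus B$ is infinite by almost disjointness. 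For each $k \in A \setminus B$, inside $\Sigma_k$ the triangulation $T_A$ equals $\tau_k$ while $T_B$ equals $T|_{\Sigma_k}$, so $\gamma_k$ is an arc of $T_A$ with $i(\gamma_k, T_B) = i(\gamma_k, T|_{\Sigma_k}) \geq k$. Letting $k \to \infty$ through $A \setminus B$ shows these intersection numbers are unbounded, so by Theorem \ref{thm:A} the triangulations $T_A$ and $T_B$ lie in different components. Hence $A \mapsto [T_A]$ is an injection from $\mathcal{A}$ into the set of connected components, producing at least $\mathfrak{c}$, and in particular uncountably many, of them.

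I expect the main obstacle to be in the geometric bookkeeping of the scaffolding step rather than in the combinatorics: namely making rigorous that $\eta$ can be chosen so that infinitely many complementary pieces are simultaneously finite type, locally finite in $\Sigma$, and rich enough to have infinite-diameter flip graph, and then verifying that the locally supported modifications glue to an honest triangulation with no accumulation of arcs. The hypothesis of infinite type is used exactly here, to guarantee infinitely many pieces with nontrivial topology in which to twist; everything downstream (the almost disjoint family, the piecewise intersection computation, and the appeal to Theorem \ref{thm:A}) is then routine.
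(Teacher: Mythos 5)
Your proposal is correct, but it takes a genuinely different route from the paper. The paper never decomposes the surface: it fixes one triangulation $T$, extracts (via two finiteness observations in the spirit of Proposition \ref{prop:arcfinite}) an infinite collection of disjoint pairs $(\delta_{i_j},\alpha_j)$ of a simple closed curve and an arc of $T$ intersecting it, and then produces the uncountable family as $\varphi_N(T)$ for Dehn twist compositions $\varphi_N$ indexed by an uncountable family of infinite subsets of $\N$ with pairwise infinite differences; since homeomorphisms carry triangulations to triangulations, no gluing or triangulation-verification step is needed, and the unbounded intersections come for free from the twisting, chosen so that $i(\varphi_N(\alpha_i),\alpha_i)\geq i$. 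You instead perform local surgery on the decomposition $\Sigma\setminus\eta=\dot\cup_k \Sigma_k$ from Corollary \ref{cor:rank}, encode subsets of $\N$ by an almost disjoint family (which indeed has pairwise infinite differences, so your combinatorial gadget is if anything stronger than the paper's), and source the large-intersection arcs $\gamma_k$ from the contrapositive of Corollary \ref{cor:upper} together with the infinite diameter of the pieces' flip graphs. Both arguments then invoke only the easy direction of Theorem \ref{thm:A} (Proposition \ref{prop:nocanflip}) to separate components. What each buys: the twisting argument sidesteps your ``main obstacle'' entirely (no need for pieces with infinite-diameter flip graphs, nor to check that replaced pieces glue to an honest triangulation), while your surgery argument has the pleasant features that all your $T_A$ contain $\eta$, making the intersection computation genuinely piecewise, and that it reuses the paper's quantitative machinery. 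One small gloss to repair in your scaffolding step: Corollary \ref{cor:upper} requires the intersection bound in \emph{both} directions, so its contrapositive a priori only gives an arc of $\tau_k$ \emph{or} of $T|_{\Sigma_k}$ with large intersection; but since both triangulations of the finite-type piece $\Sigma_k$ have the same finite number $n_k$ of arcs, a one-sided bound $K$ on arcs of $\tau_k$ implies the two-sided bound $n_k K$, so after adjusting constants you still extract $\gamma_k\in\tau_k$ with $i(\gamma_k,T|_{\Sigma_k})\geq k$ by taking $\tau_k$ sufficiently far along the bi-infinite geodesic. With that adjustment your argument goes through, and it relies on Corollary \ref{cor:rank} (Section 2), so there is no circularity.
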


\begin{proof}
On $\Sigma$ there are infinitely many disjoint simple closed curves, say $\delta_i$, $i \in \N$. Now consider a triangulation $T\in \FF(\Sigma)$.

{\it Observation 1.} Each $\delta_i$ intersects finitely many arcs of $T$. 

We argue as is Proposition \ref{prop:arcfinite}. If not, there would be an accumulation of intersection points between the arcs and $\delta_i$. The accumulation point is a point of $\delta_i$, hence a point that belongs to either the interior or the interior of an edge of a triangle. But in either case, it cannot be the accumulation point of a collection of arcs.

{\it Observation 2.} Any arc of $T$ intersects a finite number of curves among $\delta_i$, $i\in \N$. 

This is because the curves are disjoint, so for instance can be completed into a pants decomposition and as our arcs are two ended, it cannot pass through infinitely many pairs of pants. 

So for each $\delta_i$, we have a finite collection of arcs of $T$ that intersect it, which in turn intersect a finite number of other curves. As a consequence we have:

{\it There exists a collection of pairs $\delta_{i_j}, \alpha_j$ where $j\in \N$, $\alpha_j\in T$, $i(\delta_{i_j}, \alpha_j) \neq 0$, and $i(\alpha_j, \delta_{i_{j'}})=0$ if $j\neq j'$.}

In other words, by successively choosing pairs of an intersecting curve and arc disjoint from all previous pairs, we get this infinite collection. Up to renumbering and relabelling, we can suppose that we have a collection $\delta_i,\alpha_i$, $i \in \N$, such that 
$$
i(\delta_i,\alpha_i) \leq i(\delta_{i+1},\alpha_{i+1}).
$$

Roughly speaking, we'll obtain new triangulations from $T$ by twisting the curves along increasing powers of Dehn twists. 

To obtain an uncountable number of connected components of $\FF(\Sigma)$, we begin by choosing a collection $\mathcal{C}$ of infinite subsets of $\N$ such that any two subsets contain infinitely many different elements. (Said in other way, the difference sets are always infinite.) 

Note that $\mathcal{C}$ is uncountable: it is of the same cardinality as infinite binary sequences which pairwise differ in infinitely many indices. And infinite binary sequences, without the difference condition, have the cardinality of $\R$. And for any given infinite binary sequence, there are at most a countable number of elements that only differ for a finite number of indices, hence our restricted set is also uncountable. 

Now we give each $\delta_i$ an orientation, and for $N\in \mathcal{C}$, we consider the homeomorphism $\varphi_N$ obtained by Dehn twisting $\delta_i$ $m_i$ times if $i\in N$ where $m_i$ is taken to be such that
$$
i(\varphi_N(\alpha_i), \alpha_i) \geq i.
$$
We now observe that if $M,N\in \mathcal{C}$ and $M\neq N$, then $\varphi_M(T)$ and $\varphi_N(T)$ belong to different connected components. Indeed, consider the sequence of integers $M\setminus N$, which is infinite by, and denote them $i_k$, $k\in N$. 

Thus the homeomorphism $\varphi_M$ acts on $\alpha_{i_k}$ by twisting $m_{i_k}$ times along $\delta_{i_k}$ but $\varphi_N$ leaves $\alpha_{i_k}$ invariant. Hence:
$$
i(\varphi_N(\alpha_{i_k}), \varphi_M(\alpha_{i_k}))= i(\alpha_{i_k}, \varphi_M(\alpha_{i_k})) \geq i_k. 
$$
And so there are arcs of $\varphi_M(T)$ with arbitrarily large intersection with arcs of $\varphi_N(T)$, and thus by Theorem \ref{thm:A}, lie in different connected components. 
\end{proof}

{\it Addresses:}\\
GAP Nonlinearity and Climate, Institut des Sciences de l'Environnement, Universit\'e de Gen\`eve, Suisse\\
Department of Mathematics, FSTM, University of Luxembourg, Esch-sur-Alzette, Luxembourg

{\it Emails:}\\
ariadna.fossastenas@unige.ch\\
hugo.parlier@uni.lu

\end{document}